\newcommand{\R}{\mathbb{R}}
\newcommand{\Sp}{\mathbb{S}}
\theoremstyle{definition}
\newtheorem{definition}{Definition}
\theoremstyle{plain}
\newtheorem{theorem}{Theorem}
\newtheorem{proposition}{Proposition}
\newtheorem{lemma}{Lemma}
\newtheorem{conjecture}{Conjecture}
\theoremstyle{remark}
\newtheorem{remark}{Remark}
\newtheorem{claim}{Claim}
\title{Vector-Valued Maclaurin Inequalities}
\author{Silouanos Brazitikos, Finlay McIntyre}
\begin{document}
	\maketitle
	
	\begin{abstract}
		We investigate a Maclaurin inequality for vectors and its connection to an Aleksandrov-type inequality for parallelepipeds.
	\end{abstract}
	
	\section{Introduction}
	The classical Maclaurin inequality compares consecutive symmetric sums for any sequence of positive real numbers:
	\begin{theorem}[Maclaurin inequality]\label{thm:maclaurin inequalities}
		For any sequence of positive real numbers $x_1,\dots,x_m$ and any $1 < k \le m$, the following inequality holds:
		\begin{align*}
			\left(\frac{\sum\limits_{1\le i_1 < \cdots < i_{k}\le m}x_{i_1}\cdots x_{i_{k}}}{\binom{m}{k}}\right)^{\frac{1}{k}} \le \left(\frac{\sum\limits_{1\le i_1 < \cdots < i_{k-1}\le m}x_{i_1}\cdots x_{i_{k-1}}}{\binom{m}{k-1}}\right)^{\frac{1}{k-1}} .
		\end{align*}
	\end{theorem}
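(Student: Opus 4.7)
Normalize by introducing the symmetric means
\begin{equation*}
p_k := \frac{1}{\binom{m}{k}} \sum_{1 \le i_1 < \cdots < i_k \le m} x_{i_1} \cdots x_{i_k}, \qquad p_0 := 1,
\end{equation*}
so that Theorem~\ref{thm:maclaurin inequalities} is equivalent to $p_k^{k-1} \le p_{k-1}^{k}$. My plan is to derive this from Newton's inequalities
\begin{equation*}
p_k^2 \ge p_{k-1}\, p_{k+1} \qquad (1 \le k \le m-1),
\end{equation*}
from which Maclaurin's inequality will follow by a short telescoping argument.

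For Newton's inequalities I would exploit the real-rootedness of the generating polynomial
\begin{equation*}
P(t) = \prod_{i=1}^m (t + x_i) = \sum_{j=0}^{m} \binom{m}{j} p_j \, t^{m-j},
\end{equation*}
whose roots $-x_i$ are real and negative by hypothesis. Real-rootedness is preserved under differentiation (Rolle's theorem) and under the reciprocation $R(t)\mapsto t^{\deg R}R(1/t)$. Applying an appropriate combination of these operations to $P$, one can produce a degree-two polynomial of the form $A t^2 + B t + C$ in which $A, B, C$ are positive scalar multiples of $p_{k+1}, p_k, p_{k-1}$ respectively, with combinatorial factors matched precisely so that the discriminant condition $B^2 \ge 4AC$ is exactly $p_k^2 \ge p_{k-1}\,p_{k+1}$. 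Carrying out this reduction---counting derivatives and tracking the binomial factors that cancel---is the main obstacle I expect to meet; the rest of the argument is essentially bookkeeping.

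Once Newton's inequalities are established, set $r_j := p_j/p_{j-1}$ for $j \ge 1$. The inequality $p_k^2 \ge p_{k-1} p_{k+1}$ rearranges to $r_{k+1} \le r_k$, so the sequence $(r_j)$ is non-increasing. Using $p_0 = 1$, the telescoping identity $p_j = r_1 r_2 \cdots r_j$ holds, and the target $p_k^{k-1} \le p_{k-1}^{k}$ reduces to
\begin{equation*}
r_k^{k-1} \le \prod_{j=1}^{k-1} r_j,
\end{equation*}
which follows factor-by-factor from the monotonicity $r_k \le r_j$ for each $j \le k-1$. This completes the strategy.
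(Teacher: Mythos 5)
The paper does not give its own proof of Theorem~\ref{thm:maclaurin inequalities}; it cites the result as classical (Newton; Hardy--Littlewood--P\'olya). Your strategy --- establish Newton's inequalities $p_k^2 \ge p_{k-1}p_{k+1}$ from real-rootedness of $\prod_i(t+x_i)$ under differentiation and reciprocation, then deduce Maclaurin by telescoping --- is exactly the standard route found in \cite{inequalities}, and it also matches the remark in the paper's introduction that Maclaurin follows from Newton ``by a simple argument.'' Your telescoping half is complete and correct as written.

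The Newton half is sound in outline but you explicitly defer the ``bookkeeping,'' and a proof cannot rest on a gap you yourself flag as the main obstacle. For the record, it does work out: with $P(t)=\sum_{j=0}^m\binom{m}{j}p_j\,t^{m-j}$, apply $\partial^{\,m-k-1}$, then reciprocate $R(t)\mapsto t^{\deg R}R(1/t)$, then apply $\partial^{\,k-1}$; the result is $\tfrac{m!}{2}\bigl(p_{k+1}t^2+2p_kt+p_{k-1}\bigr)$, and nonnegativity of its discriminant is precisely $p_k^2\ge p_{k-1}p_{k+1}$. Two points you should make explicit when you write this up: (i) reciprocation preserves real-rootedness here because every intermediate derivative of $P$ has all roots strictly negative (they interlace those of $P$), so no coefficient at $t^0$ vanishes and the degree does not drop; (ii) $p_{k+1}>0$ since the $x_i$ are positive and $k+1\le m$, so the final polynomial is genuinely quadratic and the discriminant test applies. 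With those details filled in, the argument is complete and correct.
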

	This was first proved in \cite{newton1761arithmetica} and
	a proof of this result using elementary methods can be found also in \cite{inequalities}. The Maclaurin inequality can be seen as a refinement of the arithmetic-geometric mean inequality by noting that the geometric mean and arithmetic mean appear as the smallest and largest quantities respectively in the following chain of inequalities:
	\begin{align*}
		\left(x_1\cdots x_m\right)^{\frac{1}{m}}\le \left(\frac{\sum\limits_{1\le i_1 < \cdots < i_{m-1}\le m}x_{i_1}\cdots x_{i_{m-1}}}{\binom{m}{m-1}}\right)^{\frac{1}{m-1}} \le \cdots \le 
		\left(\frac{\sum\limits_{1\le i <  j\le m}x_{i}x_{j}}{\binom{m}{2}}\right)^{\frac{1}{2}} \le \frac{\sum_{i=1}^mx_i}{m}, 
	\end{align*}
which follows directly from Theorem \ref{thm:maclaurin inequalities}.
	
	In this note, we explore a variant of this classical result, with sequences of numbers replaced by families of vectors, and the standard product replaced by the wedge product operator. More precisely, let $v_1,\dots,v_m \in \R^d$ with $d \le m$, and for any $1 \le i_1<\cdots<i_k \le m$ with $k\le d$ denote by $|v_{i_1}\wedge\cdots\wedge v_{i_k}|$ the $k$-dimensional volume of the parallelotope spanned by $v_{i_1},\dots,v_{i_k}$. We are interested in "vector-valued" inequalities of the form 
\begin{align}\label{ineq:vec maclaurin with p}
	\left(\frac{\sum\limits_{1\le i_1 < \cdots < i_{k}\le m}|v_{i_1}\wedge\cdots\wedge v_{i_{k}}|^p}{\binom{m}{k}}\right)^{\frac{1}{kp}} \le \left(\frac{\sum\limits_{1\le i_1 < \cdots < i_{k-1}\le m}|v_{i_1}\wedge \cdots\wedge v_{i_{k-1}}|^p}{\binom{m}{k-1}}\right)^{\frac{1}{(k-1)p}},
\end{align}
with $p\in [0,\infty]$ and $2\le k\le d$. Note that if $m=d$ and $v_1,\dots,v_m$ are orthogonal, then each term $|v_{i_1}\wedge\cdots\wedge v_{i_k}|^p$ will just be equal to a $k$-fold product of numbers, namely $\|v_{i_1}\|^p\cdots \|v_{i_k}\|^p$. It follows that (\ref{ineq:vec maclaurin with p}) reduces to a special case of the classical Maclaurin inequality for any $p\in(0,\infty)$. 

Given a general family of vectors $v_1,\dots,v_m \in \R^d$, the value of $p$ plays a more important role. Using elementary results from linear algebra, we are able to establish (\ref{ineq:vec maclaurin with p}) for $p=2$ and $m=d$:
\begin{theorem}\label{thm:vec maclaurin with squares}
	For any $d$-tuple of vectors $v_1,\dots,v_d \in \R^d$ and any $1 < k \le d$, the following inequality holds:
	\begin{align*}
		\left(\frac{\sum\limits_{1\le i_1 < \cdots < i_{k}\le d}|v_{i_1}\wedge\cdots\wedge v_{i_{k}}|^2}{\binom{d}{k}}\right)^{\frac{1}{k}} \le \left(\frac{\sum\limits_{1\le i_1 < \cdots < i_{k-1}\le d}|v_{i_1}\wedge \cdots\wedge v_{i_{k-1}}|^2}{\binom{d}{k-1}}\right)^{\frac{1}{k-1}} .
	\end{align*}
\end{theorem}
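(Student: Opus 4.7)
The plan is to reduce the vector inequality to the classical Maclaurin inequality applied to the spectrum of the Gram matrix. Let $V$ be the $d\times d$ matrix with columns $v_1,\dots,v_d$, and consider the Gram matrix $G = V^{T}V$, which is symmetric and positive semi-definite with non-negative eigenvalues $\lambda_1,\dots,\lambda_d$.

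The first key step is to identify each term on either side of the inequality with a principal minor of $G$. By the Cauchy--Binet formula (or by a direct computation using the exterior algebra identity $\langle v_{i_1}\wedge\cdots\wedge v_{i_k},\, v_{j_1}\wedge\cdots\wedge v_{j_k}\rangle = \det(\langle v_{i_a}, v_{j_b}\rangle)_{a,b}$), one has
\begin{align*}
|v_{i_1}\wedge\cdots\wedge v_{i_k}|^2 = \det\bigl(G_{S,S}\bigr),
\end{align*}
where $S=\{i_1,\dots,i_k\}$ and $G_{S,S}$ is the corresponding $k\times k$ principal submatrix of $G$.

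The second step is to invoke the classical identity
\begin{align*}
\sum_{\substack{S\subseteq\{1,\dots,d\}\\ |S|=k}} \det(G_{S,S}) \;=\; e_k(\lambda_1,\dots,\lambda_d),
\end{align*}
where $e_k$ is the $k$-th elementary symmetric polynomial. This follows from expanding the characteristic polynomial $\det(tI-G) = \prod_{i}(t-\lambda_i)$ and matching coefficients. Consequently, the left-hand side and right-hand side of the desired inequality are exactly the power-mean-normalised elementary symmetric functions of $\lambda_1,\dots,\lambda_d$ that appear in Theorem \ref{thm:maclaurin inequalities}.

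The final step is to apply Theorem \ref{thm:maclaurin inequalities} to the eigenvalues $\lambda_1,\dots,\lambda_d$. The only mild obstacle is that Maclaurin's inequality is stated for strictly positive numbers, whereas $G$ may be singular if $v_1,\dots,v_d$ are linearly dependent. This is handled either by a density/continuity argument, perturbing the $v_i$ to $v_i+\varepsilon e_i$ so that $G$ becomes positive definite and then letting $\varepsilon\to 0$, or by observing that both sides of the inequality are continuous in the $v_i$ and that Maclaurin's inequality extends to non-negative reals by the same limiting argument. This completes the proof; no step requires more than standard linear algebra once the reduction to the spectrum of $G$ is in place.
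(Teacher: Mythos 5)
Your proof is correct and follows essentially the same route as the paper: identify each $|v_{i_1}\wedge\cdots\wedge v_{i_k}|^2$ with a principal $k$-minor of the Gram matrix, sum these to get the elementary symmetric polynomial $e_k$ of its eigenvalues, and apply the classical Maclaurin inequality to the spectrum. If anything you are more careful than the paper, which glosses over the degeneracy issue when $G$ is singular and is slightly loose about whether the $\lambda_i$ are eigenvalues of $A$ or of $A^{T}A$; your perturbation/continuity remark and your explicit use of $G=V^{T}V$ fix both points.
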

\noindent Moreover, taking limits as $p \to \infty$, we can write
 \begin{align*}
 	\lim\limits_{p \to\infty}\left(\frac{\sum\limits_{1\le i_1 < \cdots < i_{k}\le m}|v_{i_1}\wedge\cdots\wedge v_{i_{k}}|^p}{\binom{m}{k}}\right)^{\frac{1}{p}} = \max\limits_{1\le i_1 < \cdots < i_{k}\le m}|v_{i_1}\wedge\cdots\wedge v_{i_{k}}|. 
 \end{align*}
Again using purely linear algebra, namely Szasz's inequality for subdeterminants, we are able to prove the following endpoint case:
\begin{theorem}\label{thm:vec maclaurin with p = infty}
	Fix vectors $v_1,\dots,v_m \in \R^d$ with $1\le d \le m$. Then, for any $1< k\le d$, the following inequality holds:
	\begin{align}
		\left(\max\limits_{1\le i_1 < \cdots < i_{k}\le m}|v_{i_1}\wedge\cdots\wedge v_{i_{k}}|\right)^{\frac{1}{k}} \le\left(\max\limits_{1\le i_1 < \cdots < i_{k-1}\le m}|v_{i_1}\wedge\cdots\wedge v_{i_{k-1}}|\right)^{\frac{1}{k-1}}.
	\end{align}
\end{theorem}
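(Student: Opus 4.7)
The plan is to reduce the statement to \emph{Szasz's inequality for subdeterminants}, applied to the Gram matrix of the $k$-tuple that realizes the left-hand side. Write $M_k := \max\limits_{1 \le i_1 < \cdots < i_k \le m} |v_{i_1}\wedge\cdots\wedge v_{i_k}|$, and similarly for $M_{k-1}$; the target inequality is then $M_k^{k-1} \le M_{k-1}^{k}$. Select indices $j_1 < \cdots < j_k$ realizing $M_k$, and let $G$ denote their $k\times k$ Gram matrix $G_{ab}=\langle v_{j_a},v_{j_b}\rangle$. Then $G$ is positive semi-definite, $\det G = M_k^2$, and for each $i\in\{1,\dots,k\}$ the principal $(k-1)\times(k-1)$ submatrix $G^{(i)}$ obtained by deleting the $i$-th row and column satisfies
\[
\det G^{(i)} \;=\; |v_{j_1}\wedge\cdots\wedge\widehat{v_{j_i}}\wedge\cdots\wedge v_{j_k}|^{2} \;\le\; M_{k-1}^{2}.
\]

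Szasz's inequality asserts that for any $k\times k$ positive semi-definite matrix $G$ one has
\[
(\det G)^{k-1} \;\le\; \prod_{i=1}^{k} \det G^{(i)}.
\]
Plugging in our two computations yields $M_k^{2(k-1)} \le M_{k-1}^{2k}$, and taking $(2k(k-1))$-th roots gives exactly $M_k^{1/k} \le M_{k-1}^{1/(k-1)}$.

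For completeness, I would include a short self-contained proof of Szasz's inequality. The key identity is the Schur complement expansion: for each $i$, $\det G = (\det G^{(i)})\cdot s_i$, where $s_i$ is the Schur complement of $G^{(i)}$ in $G$, and $0 \le s_i \le G_{ii}$ by positive semi-definiteness. Taking the product over $i=1,\dots,k$ gives $(\det G)^{k} \le \bigl(\prod_{i} G_{ii}\bigr)\prod_{i}\det G^{(i)}$, after which Hadamard's inequality $\det G \le \prod_i G_{ii}$ cancels one factor of $\det G$ on each side to yield $(\det G)^{k-1} \le \prod_i \det G^{(i)}$.

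The argument is essentially a one-line application once the ingredients are in place: the main (and only) obstacle is recognizing that the maximum $M_k$ depends on a single $k$-tuple, so one can work with just its Gram matrix and does not need a simultaneous estimate over all $\binom{m}{k}$ subsets as one would for the $p<\infty$ cases.
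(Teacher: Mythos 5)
Your main argument is exactly the paper's: pick the $k$-tuple achieving the maximum, pass to its Gram matrix, and invoke the base case of Szasz's inequality (determinant versus the $(k-1)\times(k-1)$ principal minors), then bound each such minor by $M_{k-1}^2$ and take roots. Writing it in terms of Gram determinants rather than wedge products is cosmetic; the mathematical content is identical, so on the theorem itself you are in agreement with the paper.

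However, your ``for completeness'' proof of the needed case of Szasz's inequality has a genuine error. You correctly obtain $\det G = s_i \det G^{(i)}$ with $0\le s_i\le G_{ii}$, and hence $(\det G)^k \le \bigl(\prod_i G_{ii}\bigr)\prod_i \det G^{(i)}$. But then invoking Hadamard's inequality $\det G\le \prod_i G_{ii}$ does not ``cancel a factor'': replacing $\prod_i G_{ii}$ by the \emph{smaller} quantity $\det G$ weakens the right-hand side, so you cannot conclude $(\det G)^{k-1}\le \prod_i\det G^{(i)}$. You would need the reverse Hadamard inequality $\prod_i G_{ii}\le\det G$, which is false. To see the gap concretely: from $\det G = s_i\det G^{(i)}$ one gets $\prod_i\det G^{(i)}=(\det G)^k/\prod_i s_i$, so the target inequality is equivalent to $\prod_i s_i\le\det G$, and that does \emph{not} follow from $s_i\le G_{ii}$ plus $\det G\le\prod G_{ii}$. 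The correct one-line proof is to apply Hadamard to $G^{-1}$ instead: by Cramer's rule $(G^{-1})_{ii}=\det G^{(i)}/\det G$, and since $G^{-1}$ is positive semi-definite, Hadamard gives
\[
\frac{1}{\det G}=\det G^{-1}\le\prod_{i=1}^k (G^{-1})_{ii}=\frac{\prod_i\det G^{(i)}}{(\det G)^k},
\]
which is exactly $(\det G)^{k-1}\le\prod_i\det G^{(i)}$. (Equivalently, $s_i=1/(G^{-1})_{ii}$ and Hadamard on $G^{-1}$ gives $\prod_i s_i\le\det G$.) The paper sidesteps this by citing Szasz's inequality as a black box; if you want the self-contained version, use Hadamard on the inverse, not on $G$ itself.
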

\noindent By a similar argument, we also prove (\ref{ineq:vec maclaurin with p}) for $p=0$:
\begin{theorem}\label{thm:vec maclaurin with p = 0}
	Fix vectors $v_1,\dots,v_m \in \R^d$ with $1\le d \le m$. Then, for any $1< k\le d$, the following inequality holds:
	\begin{align*}
		\left(\prod\limits_{1\le i_1 < \cdots < i_{k}\le m}|v_{i_1}\wedge\cdots\wedge v_{i_{k}}| \right)^{\frac{1}{\binom{m}{k} k}} \le \left(\prod\limits_{1\le i_1 < \cdots < i_{k-1}\le m}|v_{i_1}\wedge\cdots\wedge v_{i_{k-1}}| \right)^{\frac{1}{\binom{m}{k-1} (k-1)}}.
	\end{align*}
\end{theorem}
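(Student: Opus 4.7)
The plan is to deduce this theorem from Szasz's inequality for principal minors of a positive semidefinite matrix, exactly as the $p=\infty$ case (Theorem \ref{thm:vec maclaurin with p = infty}) was deduced from Szasz's inequality for maxima over subdeterminants. Specifically, I will form the $m\times m$ Gram matrix $G$ of $v_1,\dots,v_m$, defined by $G_{ij}=\langle v_i,v_j\rangle$, which is positive semidefinite. The key identity linking the geometric and linear-algebraic sides is that for every $S=\{i_1,\dots,i_k\}\subseteq[m]$, the principal $k\times k$ minor $\det G[S]$ equals $|v_{i_1}\wedge\cdots\wedge v_{i_k}|^2$.

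Let $p_\ell(G)$ denote the product of all $\binom{m}{\ell}$ principal $\ell\times\ell$ minors of $G$. Szasz's inequality, applied to the $m\times m$ PSD matrix $G$, asserts
\begin{align*}
    p_k(G)^{1/\binom{m-1}{k-1}} \le p_{k-1}(G)^{1/\binom{m-1}{k-2}}.
\end{align*}
Using the identity from the previous paragraph, we have $p_\ell(G)=\prod_{|S|=\ell}|v_{i_1}\wedge\cdots\wedge v_{i_\ell}|^2$, so taking $(2m)$-th roots converts Szasz's inequality into the desired statement once we match exponents via the elementary identity
\begin{align*}
    k\binom{m}{k} = m\binom{m-1}{k-1}, \qquad (k-1)\binom{m}{k-1}=m\binom{m-1}{k-2}.
\end{align*}
Indeed, the left-hand side of the theorem is $p_k(G)^{1/(2k\binom{m}{k})}=p_k(G)^{1/(2m\binom{m-1}{k-1})}$, and analogously for the right-hand side, so Szasz's inequality yields exactly the claim after taking a square root.

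The only subtlety is the degenerate case where some wedge product vanishes. If every $(k-1)$-fold wedge is strictly positive then every relevant $k\times k$ principal minor is well-defined and strictly positive (up to possible vanishing that remains harmless on the left), and Szasz applies directly. If some $(k-1)$-fold wedge vanishes, then the corresponding $k-1$ vectors are linearly dependent, which forces every $k$-fold wedge extending this subset to vanish; hence both sides of the inequality are zero and the statement is trivial. I expect the main ``obstacle'' to be purely bookkeeping of the exponents via the binomial identity above and a clear statement of Szasz's inequality in the normalization we need; the mathematical content is entirely contained in Szasz's inequality.
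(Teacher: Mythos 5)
Your proof is correct and takes essentially the same approach as the paper: both form the Gram matrix $G=A^TA$, identify principal $k$-minors with squared $k$-fold wedge volumes, apply Szasz's inequality to $G$, and match exponents via $k\binom{m}{k}=m\binom{m-1}{k-1}$. Your additional remark on the degenerate case (where some wedge vanishes) is a reasonable bit of caution that the paper leaves implicit, but does not change the substance of the argument.
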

\noindent It is not difficult to verify that (\ref{ineq:vec maclaurin with p}) fails to hold in general for negative values of $p$.

If we take $p=1$, it seems more difficult to establish the desired inequality for a general family of vectors. However, in the case where $m=d$, we have some partial results: 
\begin{theorem}\label{thm:vec maclaurin list of known p=1 cases}
	If $m=d$, then for any $v_1,\dots,v_m \in \R^d$, inequality (\ref{ineq:vec maclaurin with p}) holds with $p=1$ and $k=2,3,d$ in all dimensions $d$.
\end{theorem}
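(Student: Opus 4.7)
The plan is to handle the three cases $k = 2$, $k = d$, and $k = 3$ separately, in increasing order of difficulty.

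For $k = 2$, the inequality reduces immediately to the classical Maclaurin inequality (Theorem~\ref{thm:maclaurin inequalities}) applied to the norms $\|v_1\|, \dots, \|v_d\|$, once one uses the elementary bound $|v_i \wedge v_j| \le \|v_i\|\|v_j\|$ termwise in the numerator of the left-hand side.

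For $k = d$, write $V = |v_1 \wedge \cdots \wedge v_d|$ and $F_i = |v_1 \wedge \cdots \wedge \hat{v}_i \wedge \cdots \wedge v_d|$. The target inequality then rearranges to the isoperimetric-type Aleksandrov bound $dV^{(d-1)/d} \le \sum_{i=1}^d F_i$. I would let $h_i$ be the distance from $v_i$ to the hyperplane spanned by the remaining vectors, so that $V = F_i h_i$, and introduce the dual basis $u_1, \dots, u_d$ defined by $\langle u_i, v_j \rangle = \delta_{ij}$. A short computation gives $\|u_i\| = 1/h_i$, while the matrix $U$ with rows $u_i$ satisfies $|\det U| = 1/V$; Hadamard's inequality applied to $U$ then yields $1/V \le \prod_i \|u_i\|$, i.e.\ $V \ge \prod_i h_i$. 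Combining with the identity $V^d = \prod_i F_i \cdot \prod_i h_i$ gives $\prod_i F_i \ge V^{d-1}$, and AM--GM on $F_1, \dots, F_d$ finishes the case.

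The case $k = 3$ is where the main difficulty lies. My natural starting point is to apply the three-dimensional Aleksandrov inequality just established to the parallelepiped spanned by each triple $\{v_i, v_j, v_l\}$, giving
\[
3\,|v_i \wedge v_j \wedge v_l|^{2/3} \le |v_i \wedge v_j| + |v_i \wedge v_l| + |v_j \wedge v_l|
\]
for every $i < j < l$. Summing over triples and using that each pair lies in exactly $d-2$ triples yields $3\sum_{i<j<l}|v_i \wedge v_j \wedge v_l|^{2/3} \le (d-2)\sum_{i<j}|v_i \wedge v_j|$. The obstacle is that this is a $p = 2/3$ bound on the $3$-wedges, whereas the theorem demands a $p = 1$ bound: Jensen's inequality for the concave function $x \mapsto x^{2/3}$ only produces $\sum|v_i \wedge v_j \wedge v_l|^{2/3}$ as an \emph{upper} bound for $\sum|v_i \wedge v_j \wedge v_l|$ (up to a combinatorial constant), which is the wrong direction. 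To close the argument I plan to interpolate with the known $p = 2$ inequality (Theorem~\ref{thm:vec maclaurin with squares}) via a H\"older- or Cauchy--Schwarz-type step that crucially uses the hypothesis $m = d$; this is where I expect the heart of the argument to lie.
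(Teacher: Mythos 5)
Your cases $k=2$ and $k=d$ are correct. For $k=2$ the termwise bound $|v_i\wedge v_j|\le\|v_i\|\,\|v_j\|$ followed by classical Maclaurin on the norms is exactly right (and is essentially the same observation the paper makes, packaged differently). For $k=d$ your argument via the dual basis, Hadamard's inequality and AM--GM is complete and correct modulo the trivial degenerate case $V=0$: it is the classical Hadwiger-style proof that the cube minimises surface area among parallelepipeds of fixed volume, and it is genuinely different from the paper's route, which instead proves the ratio-monotonicity statement
\begin{align*}
\frac{|v_1\wedge\cdots\wedge v_d|}{|v_2\wedge\cdots\wedge v_d|}\le\frac{\sum_{i\ge2}|v_1\wedge\cdots\wedge\widehat{v_i}\wedge\cdots\wedge v_d|}{\sum_{i\ge2}|v_2\wedge\cdots\wedge\widehat{v_i}\wedge\cdots\wedge v_d|}
\end{align*}
using monotonicity of projections, and then replaces $v_1$ by an orthogonal vector of suitable length so that classical Maclaurin applies. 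Your version is arguably more self-contained.

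The case $k=3$, however, is a genuine gap, and you have correctly located it yourself. The summed inequality $3\sum_{i<j<l}|v_i\wedge v_j\wedge v_l|^{2/3}\le(d-2)\sum_{i<j}|v_i\wedge v_j|$ controls the wrong power, and the proposed repair by interpolating with the $p=2$ result does not close. Concretely: Hölder gives $\sum a\le\bigl(\sum a^{2/3}\bigr)^{3/4}\bigl(\sum a^{2}\bigr)^{1/4}$ for the sequence $a=|v_i\wedge v_j\wedge v_l|$, and feeding in your $p=2/3$ bound together with Theorem~\ref{thm:vec maclaurin with squares} still leaves a factor of $\sum|v_i\wedge v_j|^2$, which can only be compared to $\bigl(\sum|v_i\wedge v_j|\bigr)^2$ at the cost of a factor of order $\binom{d}{2}^{1/2}$; the resulting constant misses the target by roughly that factor. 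The structural obstruction is that an upper bound on $\sum a^{2/3}$ gives no upper bound on $\sum a$ (power means go the wrong way), and no convex combination of the $p=2/3$ and $p=2$ information is sharp at $p=1$ unless the $a$'s are essentially constant. The paper's actual argument for $k=3$ is of a different nature: it reduces to the single-vector ratio inequality
\begin{align*}
\frac{\sum_{1<i<j\le d}|v_1\wedge v_i\wedge v_j|}{\sum_{1<i<j\le d}|v_i\wedge v_j|}\le\frac{\sum_{1<i\le d}|v_1\wedge v_i|}{\sum_{1<i\le d}\|v_i\|},
\end{align*}
which is an inequality of the correct bilinear homogeneity (each side degree one in $v_1$), proves it first for four vectors using projection monotonicity, the rearrangement inequality, and the barycentric-coordinate identity behind Claim~\ref{clm:beautiful little lemma in R^d} (namely $\sum_{j\ge2}|u_1\wedge\cdots\wedge\widehat{u_j}\wedge\cdots\wedge u_d|\,\|u_j\|\ge|u_2\wedge\cdots\wedge u_d|\,\|u_1\|$), and only then sums over triples. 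If you want to rescue your plan, the quantity to sum over triples is a degree-$3$ bilinear inequality of this type, not the $2/3$-power isoperimetric inequality.
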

\noindent Using a certain duality between families of vectors, one can also prove the case for $p=1$ and $k=4$ in dimensions $5$ and $6$ - see \cite{finlays_thesis}, Section 6 for details. To prove Theorem \ref{thm:vec maclaurin list of known p=1 cases}, in each case we essentially construct a new family of orthogonal vectors $\tilde{v}_1,\dots,\tilde{v}_m \in \R^d$ such that
\begin{align*}
	\left(\frac{\sum\limits_{1\le i_1 < \cdots < i_{k}\le m}|v_{i_1}\wedge\cdots\wedge v_{i_{k}}|}{\binom{m}{k}}\right)^{\frac{1}{k}} \le
	 \left(\frac{\sum\limits_{1\le i_1 < \cdots < i_{k}\le m}|\tilde{v}_{i_1}\wedge\cdots\wedge \tilde{v}_{i_{k}}|}{\binom{m}{k}}\right)^{\frac{1}{k}},
\end{align*}
and
\begin{align*}
	\left(\frac{\sum\limits_{1\le i_1 < \cdots < i_{k-1}\le m}|v_{i_1}\wedge \cdots\wedge v_{i_{k-1}}|}{\binom{m}{k-1}}\right)^{\frac{1}{k-1}} \ge
	\left(\frac{\sum\limits_{1\le i_1 < \cdots < i_{k-1}\le m}|\tilde{v}_{i_1}\wedge \cdots\wedge \tilde{v}_{i_{k-1}}|}{\binom{m}{k-1}}\right)^{\frac{1}{k-1}}.
\end{align*}
The desired result then follows by a simple application of Theorem \ref{thm:maclaurin inequalities}.

In light of these results, we conjecture that the vector-valued Maclaurin inequalities should hold in the full range $0\le p \le \infty$:
\begin{conjecture}[Vector-valued Maclaurin inequality]\label{conj:vec maclaurin}
	Fix vectors $v_1,\dots,v_m \in \R^d$ with $1\le d \le m$. Then for all $p\in [0,\infty]$ and $2\le k\le d$, the following inequality holds:
	\begin{align*}
		\left(\frac{\sum\limits_{1\le i_1 < \cdots < i_{k}\le m}|v_{i_1}\wedge\cdots\wedge v_{i_{k}}|^p}{\binom{m}{k}}\right)^{\frac{1}{kp}} \le \left(\frac{\sum\limits_{1\le i_1 < \cdots < i_{k-1}\le m}|v_{i_1}\wedge \cdots\wedge v_{i_{k-1}}|^p}{\binom{m}{k-1}}\right)^{\frac{1}{(k-1)p}},
	\end{align*}
	with equality if and only if $m=d$ and the vectors $v_i$ form an orthonormal basis. 
\end{conjecture}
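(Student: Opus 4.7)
The plan is to attack Conjecture \ref{conj:vec maclaurin} via the geometry of the zonotope $Z := \sum_{i=1}^{m} [0, v_i] \subset \R^d$, motivated by the paper's framing in terms of Aleksandrov-type inequalities for parallelepipeds. The Shephard--McMullen formula identifies the $k$-th intrinsic volume of $Z$ as $V_k(Z) = \sum_{|I|=k} |v_{i_1}\wedge\cdots\wedge v_{i_k}|$, so the $p = 1$ case of the conjecture reads precisely as the statement that $(V_k(Z)/\binom{m}{k})^{1/k}$ is non-increasing in $k$. Since the endpoints $p \in \{0, 2, \infty\}$ are already settled, $p = 1$ is the crucial new case; once it is available, the strategy would be to interpolate to all $p \in [0, \infty]$ by combining the four known values with H\"older-type monotonicity and continuity in $p$.

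For the $p = 1$ case the first move is to apply Aleksandrov's inequality for quermassintegrals to $Z$. Here, however, there is a genuine subtlety: Aleksandrov's inequality is sharp at the Euclidean ball, whereas Conjecture \ref{conj:vec maclaurin} is sharp at the unit cube (orthonormal $v_i$'s with $m = d$). A direct application of Aleksandrov produces extra factors of the shape $\omega_{d-k}/\omega_d$ that are strictly unfavourable at the cube and spoil the clean $\binom{m}{k}$ normalisation. What is really needed is a \emph{cubical} strengthening of Aleksandrov, restricted to zonotopes. In the parallelotope case $m = d$ this amounts to the assertion that, among parallelotopes of prescribed $(k{-}1)$-face area, the cube minimises $k$-face area; I would try to establish it either by a Lagrange multiplier computation on the Gram determinant or by a symmetrisation procedure that pushes the generators $v_i$ towards orthogonality while controlling both sides of the inequality. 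For general $m > d$ one can then attempt an induction on $m - d$ using the multilinearity of $V_k$ under Minkowski summation, or reduce to $m = d$ via an orthogonalisation construction analogous to the one used in the proof of Theorem \ref{thm:vec maclaurin list of known p=1 cases}.

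The main obstacle is precisely the cubical Aleksandrov inequality above. Standard Aleksandrov--Fenchel machinery is essentially equivalent to Brunn--Minkowski, whose extremiser is the ball, so producing a sharper inequality that is tight at the cube requires a genuinely new ingredient---perhaps a bespoke symmetrisation on the space of generating vectors, or a representation of $\sum_{|I|=k}|v_I|$ as a mixed discriminant of Gram matrices amenable to eigenvalue-style manipulation. Two secondary obstacles should also be flagged: the rigidity part of the conjecture (equality only for orthonormal bases with $m = d$) forces every step of whichever chain of inequalities one uses to be strictified; and the interpolation from $p \in \{0, 1, 2, \infty\}$ to general $p$ is not immediate, since neither side of the inequality is a log-convex function of $p$ on its own and so the two sides must be coupled throughout the argument.
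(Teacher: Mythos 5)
The statement you were asked to prove is labelled a \emph{conjecture} in the paper, and the paper provides no proof of it; it establishes only partial results ($p=0,2,\infty$ for all $m\ge d$, and $p=1$ for $m=d$ with $k\in\{2,3,d\}$, plus a non-sharp $p=1$ inequality with the dimensional constant $\tfrac{2(d-k+1)}{d-k+2}$). Your proposal, read carefully, is likewise not a proof: it is a research outline that correctly diagnoses where the difficulty lies. So there is no completed argument on either side to compare, and the relevant question is whether your assessment of the obstacles is accurate. It is, and it matches the paper's own discussion closely. Your identification of the zonotope formula $V_k(Z)=\sum_{|I|=k}|v_{i_1}\wedge\cdots\wedge v_{i_k}|$ as the bridge to convex geometry is exactly the paper's Section~\ref{section:connections to intrinsic volumes}; your observation that Aleksandrov's inequality for quermassintegrals is sharp at the ball and therefore carries the wrong constants for this problem is precisely why the paper's Theorem~\ref{non-sharp} falls short of sharpness; and your call for a ``cubical Aleksandrov'' inequality, or equivalently a convex body $L$ whose projections $|\pi_{\{v_{i_1},\dots,v_{i_k}\}^\perp}L|$ are constant in $k$, is the very approach the authors explore (apply Aleksandrov--Fenchel to $V(P,k;L,d-k)$ after normalising by those projections). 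In short, you have accurately reproduced the paper's framing and its known obstructions.

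Two remarks to keep you honest. First, your suggestion that once $p=1$ is in hand, all $p\in[0,\infty]$ would follow by ``H\"older-type monotonicity and continuity in $p$'' should not be left as a throwaway; both sides of the inequality are log-convex in $p$ separately, but their difference is not, and the quantities $|v_I|$ for varying $|I|$ are not joint powers of a common family, so no standard interpolation lemma applies. You flag this yourself at the end, but the earlier phrasing reads as though interpolation were routine, which it is not. Second, even granting the parallelotope case $m=d$, the reduction to general $m>d$ is not merely ``induction on $m-d$ via multilinearity'': the paper's own orthogonalisation argument (Theorem~\ref{thm:vec maclaurin p=1 reduction}) requires one to control both $S_k$ and $S_{k-1}$ simultaneously under a single replacement $v_1\mapsto\tilde v_1$, and the existence of a suitable $\|\tilde v_1\|$ is precisely the open inequality~\eqref{ineq:vec maclaurin p=1 reduction}, which the paper can only verify for $k\in\{2,3,d\}$. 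So the gap is genuinely in the $p=1$, general $k$ case, and your proposal, while a faithful map of the terrain, does not close it.
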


It should be noted that for $p=1$, the vector-valued Maclaurin inequality is of particular interest, as it turns out to be closely related to the far-reaching Aleksandrov--Fenchel inequality from convex geometry. By a simple argument one can deduce the classical Maclaurin inequality as a consequence of Newton's inequality, and similarly one would be able to deduce the vector-valued Maclaurin inequality for $p=1$ from a corresponding vector-valued version of Newton's inequality of the following form
\begin{align}\label{ineq:vec newton}
	\nonumber&\left(\frac{\sum\limits_{1\le i_1 < \cdots < i_{k}\le m}|v_{i_1}\wedge\cdots\wedge v_{i_{k}}|}{\binom{m}{k}}\right)^{2} \ge \\
	 &\left(\frac{\sum\limits_{1\le i_1 < \cdots < i_{k-1}\le m}|v_{i_1}\wedge \cdots\wedge v_{i_{k-1}}|}{\binom{m}{k-1}}\right)\left(\frac{\sum\limits_{1\le i_1 < \cdots < i_{k+1}\le m}|v_{i_1}\wedge \cdots\wedge v_{i_{k+1}}|}{\binom{m}{k+1}}\right),
\end{align}
where $2\le k\le d-1$. It is worth noting the explicit connection between Newton's inequality and the Aleksandrov--Fenchel inequality. Up until this point, in the literature, the Aleksandrov--Fenchel inequality has been referred to as a Newton-type inequality, simply because it is of the same form (square greater than a product).  

To illustrate the connection between the vector-valued Maclaurin inequality and the Aleksandrov--Fenchel inequality, let us denote by $P$ the following Minkowski sum of line segments
\begin{align*}
	P = \sum_{j=1}^{m}\frac{1}{2}[-v_j,v_j].
\end{align*}
Using this notation, Conjecture \ref{conj:vec maclaurin} with $p=1$ exactly states that for $1<k\le d$ we have
\begin{align}\label{ineq:vec maclaurin intrinsic volumes}
	\left(\frac{V_{k}(P)}{\binom{m}{k}}\right)^{\frac{1}{k}}\le \left(\frac{V_{k-1}(P)}{\binom{m}{k-1}}\right)^{\frac{1}{k-1}},
\end{align}
or 
\begin{align*}
	\left(\frac{V_{k}(P)}{V_k(C_m)}\right)^{\frac{1}{k}}\le \left(\frac{V_{k-1}(P)}{V_{k-1}(C_m)}\right)^{\frac{1}{k-1}},
\end{align*}
where $V_k$ denotes the $k$-th intrinsic volume and $C_m$ is the $m$-dimensional unit cube. This is a general isoperimetric-type inequality. For example, the case $m=d$ and and $k=d$, which is proved here, says that among all parallelepipeds (non necessarily orthogonal) with the same volume, the cube has the smallest surface area. This particular case was first proved by Hadwiger in \cite{hadwiger_book} (see also \cite{handbook}) using Steiner symmetrisation.  Moreover, if (\ref{ineq:vec maclaurin intrinsic volumes}) holds for an arbitrary sum of line segments, then one would recover the dimension free estimate of McMullen \cite{mcmullen1991inequalities} restricted to the class of zonoids. Furthermore, (\ref{ineq:vec maclaurin intrinsic volumes}) is related to isoperimetric-type inequalities proved in \cite{schneider_hug}. 

Despite the fact that we don't have a proof for the sharp inequality \eqref{ineq:vec maclaurin intrinsic volumes}, we are able to prove it with a constant that it is bounded by an absolute constant that doesn't depend on the dimension. 
\begin{theorem}\label{non-sharp}
	For any $d$-tuple of vectors $v_1,\dots,v_d \in \R^d$ and any $2 < k \le d$, the following inequality holds:
	\begin{align*}
		\left(\frac{\sum\limits_{1\le i_1 < \cdots < i_{k}\le d}|v_{i_1}\wedge\cdots\wedge v_{i_{k}}|}{\binom{d}{k}}\right)^{\frac{1}{k}} \le \frac{2(d-k+1)}{(d-k+2)} \left(\frac{\sum\limits_{1\le i_1 < \cdots < i_{k-1}\le d}|v_{i_1}\wedge \cdots\wedge v_{i_{k-1}}|}{\binom{d}{k-1}}\right)^{\frac{1}{k-1}}
	\end{align*}
\end{theorem}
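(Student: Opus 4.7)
The plan is to reduce Theorem \ref{non-sharp} to the top-degree case ($k=d$) of Theorem \ref{thm:vec maclaurin list of known p=1 cases} by applying that sharp inequality locally to each $k$-subset of the vectors. For each $J=\{j_1,\dots,j_k\}\subset\{1,\dots,d\}$, I will regard $v_{j_1},\dots,v_{j_k}$ as a $k$-tuple in their (at most $k$-dimensional) span and apply the case $k=d$ of Theorem \ref{thm:vec maclaurin list of known p=1 cases} to obtain
$$|v_{j_1}\wedge\cdots\wedge v_{j_k}|^{1/k}\le \left(\frac{1}{k}\sum_{I\subset J,\,|I|=k-1}|v_I|\right)^{1/(k-1)}.$$
Raising to the $k$-th power and summing over all $|J|=k$ yields
$$\sum_{|J|=k}|v_J|\le \sum_{|J|=k}a_J^{k/(k-1)},\qquad a_J:=\frac{1}{k}\sum_{I\subset J,\,|I|=k-1}|v_I|,$$
and a standard double-counting identity gives $\sum_{J}a_J=\frac{d-k+1}{k}\sum_{|I|=k-1}|v_I|$, which controls the total mass on the right.

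I will then bound $\sum_J a_J^{k/(k-1)}$ in terms of $\sum_J a_J$ through a convexity (or H\"older-type) estimate, rewrite the resulting inequality in the normalized form of Theorem \ref{non-sharp} by inserting the factors $\binom{d}{k}^{-1/k}$ and $\binom{d}{k-1}^{1/(k-1)}$, and take $k$-th roots. Tracing through the constants should then produce an inequality of the stated shape, from which one reads off the claimed right-hand side constant.

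The hard part will be the convexity estimate. The crude superadditivity bound $\sum_J a_J^{k/(k-1)}\le(\sum_J a_J)^{k/(k-1)}$ produces a constant of order $\binom{d}{k}^{1/(k(k-1))}$, which grows with $d$ for fixed $k$ and so cannot match the dimension-free constant $\frac{2(d-k+1)}{d-k+2}$ (always at most $2$) claimed in Theorem \ref{non-sharp}. To close this gap I would supplement the local step with Theorem \ref{thm:vec maclaurin with squares} (the sharp $p=2$ Maclaurin) together with Cauchy--Schwarz, exploiting the observation that the configurations which make the naive bound loose --- namely those in which the $|v_I|$ mass is concentrated on a few $(k-1)$-subsets --- are precisely the configurations in which many $k$-wedges on the left must collapse, since they necessarily involve a nearly degenerate direction. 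Quantifying this trade-off, for instance by splitting into a ``concentrated'' and a ``spread-out'' regime of the $|v_I|$ and handling each via Theorem \ref{thm:vec maclaurin with squares} or directly, should recover the dimension-independent constant stated in Theorem \ref{non-sharp}.
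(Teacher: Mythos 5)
Your setup is clean and correct: the local application of the $k$-variable sharp inequality (the $k=d$ case of Theorem \ref{thm:vec maclaurin list of known p=1 cases}, applied inside the span of each $k$-subset), together with the double-counting identity
\[
\sum_{|J|=k} a_J \;=\; \frac{d-k+1}{k}\sum_{|I|=k-1}|v_I|,
\]
is a genuinely different starting point from the paper's. The paper instead routes everything through the monotonicity reduction of Theorem \ref{thm:vec maclaurin p=1 reduction}: it rewrites the sufficient condition \eqref{ineq:vec maclaurin p=1 reduction} as a ratio inequality between intrinsic volumes of a zonotope $Z$ and its hyperplane projection,
\[
\frac{V_{k-1}(\pi_{u^\perp}Z)}{V_{k-1}(Z)} \le c\,\frac{V_{k-2}(\pi_{u^\perp}Z)}{V_{k-2}(Z)},
\]
and then imports the constant $c=\tfrac{2(d-k+1)}{d-k+2}$ from the local Aleksandrov--Fenchel inequality of Giannopoulos--Hartzoulaki--Paouris \cite{local_A-F_for_zonoids}. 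That is an honest black-box citation; the work is in setting up the reduction, which was already done in Section~\ref{section:partial results for p=1 and monotonicity argument}.

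The gap in your proposal is exactly where you flag it, and it is a real one, not a routine detail. After the local step you need to pass from $\sum_J a_J^{k/(k-1)}$ to $\left(\sum_J a_J\right)^{k/(k-1)}$, and you have correctly computed that the crude superadditivity bound produces $\binom{d}{k}^{1/(k(k-1))}$, which is unbounded in $d$. There is nothing in the local inequalities alone that prevents the $a_J$ from being concentrated: the inequality $|v_J|^{1/k}\le a_J^{1/(k-1)}$ is applied separately inside each $k$-dimensional span and never sees the global geometry of the family, so it cannot by itself detect the ``nearly degenerate direction'' phenomenon you invoke. The proposed patch --- split into a concentrated regime and a spread-out regime and control the concentrated one via Theorem~\ref{thm:vec maclaurin with squares} plus Cauchy--Schwarz --- is only a sketch; no quantitative version of the trade-off is given, and it is far from clear that such a dichotomy can be closed. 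Indeed, note that if the $a_J$ were provably close enough to uniform for the naive bound to become lossless, your argument would deliver the sharp constant $1$, i.e.\ the open case of Conjecture~\ref{conj:vec maclaurin}; so whatever regime analysis you use must genuinely lose a factor in the concentrated case, and quantifying that loss as $\tfrac{2(d-k+1)}{d-k+2}$ is the entire content of the theorem, not a finishing touch. As written, the proposal identifies the difficulty but does not overcome it.
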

\noindent Note that the constant appearing on the right-hand side is greater than $1$, but smaller than 2.

\section*{Structure of paper}
In Section \ref{section:notation and background} we introduce some relevant notation and terminology. Section \ref{section:principal minors and szaszs inequality} is dedicated to proving Theorems \ref{thm:vec maclaurin with squares}, \ref{thm:vec maclaurin with p = infty} and 
\ref{thm:vec maclaurin with p = 0}. In Section \ref{section:partial results for p=1 and monotonicity argument}, we discuss a general approach for attempting to establish (\ref{ineq:vec maclaurin with p}) with $p=1$, and prove the special cases listed in Theorem \ref{thm:vec maclaurin list of known p=1 cases}. At the beginning of Section \ref{section:connections to intrinsic volumes}, we introduce tools from convex geometry which allow us to rewrite the vector-valued Maclaurin inequality with $p=1$ in terms of convex bodies and mixed volumes. In Section \ref{section:nonsharp p=1 case}, we then use these tools to establish Theorem \ref{non-sharp}.

\section{Notation and background information}\label{section:notation and background}
We work in ${\mathbb R}^d$, which is equipped with a Euclidean structure $\langle \cdot,\cdot\rangle$
and we fix an orthonormal basis $\{e_1,\ldots ,e_d\}$. We denote by $B_2^d$ and $S^{d-1}$ the Euclidean unit ball and sphere
in ${\mathbb R}^d$ respectively. We write $\sigma $ for the normalised rotationally invariant probability measure on $S^{d-1}$ and $\nu $
for the Haar probability measure on the orthogonal group $O(d)$. Let $G_{d,k}$ denote the Grassmannian of all $k$-dimensional
subspaces of ${\mathbb R}^d$. Then, $O(d)$ equips $G_{d,k}$ with a Haar probability measure $\nu_{d,k}$.
The letters $c,c^{\prime }, c_1, c_2$ etc. denote absolute positive constants which may change from line to line. Whenever we write
$a\simeq b$, we mean that there exist absolute constants $c_1,c_2>0$ such that $c_1a\leq b\leq c_2a$.

Let ${\mathcal K}_d$ denote the class of all non-empty compact convex subsets of ${\mathbb R}^d$.
If $K\in {\mathcal K}_d$ has non-empty interior, we will say that $K$ is a convex body. If $A\in {\mathcal K}_d$,
we will denote by $|A|$ the volume of $A$ in the appropriate affine subspace unless otherwise stated. The volume of $B_2^d$ is denoted by $\omega_d$.
We say that a convex body $K$ in ${\mathbb R}^d$ is symmetric if $x\in K$ implies that $-x\in K$, and that $K$ is centred
if its centre of mass $\frac{1}{|K|}\int_Kx\,dx $ is at the origin.
The support function of a
convex body $K$ is defined by $h_K(y)=\max \{\langle x,y\rangle :x\in K\}$.
For any $E\in G_{d,k}$ we denote by $E^{\perp }$ the orthogonal subspace of $E$, i.e. $E^{\perp }=\{x\in {\mathbb R}^d:
\langle x,y\rangle =0\;\hbox{for all}\,y\in E\}$. In particular, for any $u\in S^{d-1}$ we define $u^{\perp }=\{x\in {\mathbb R}^d:\langle x,u\rangle =0\}$.
The section of $K\in {\mathcal K}_d$ with a subspace $E$ of ${\mathbb R}^d$ is $K\cap E$, and the orthogonal projection of $K$ onto $E$ is denoted
by $P_E(K)$.

Mixed volumes are introduced by a classical theorem of Minkowski which describes the way volume behaves with respect
to the operations of addition and multiplication of compact convex sets by non-negative reals: if $K_1,\ldots ,K_N\in {\cal K}_d$, $N\in {\mathbb N}$,
then the volume of $t_1K_1+\cdots +t_NK_N$ is a homogeneous polynomial of degree $d$ in $t_i\geq 0$ (see \cite{Burago-Zalgaller-book} and \cite{Schneider-book}):
\begin{equation}\label{eq:not-2}\big |t_1K_1+\cdots +t_NK_N\big |=\sum_{1\leq i_1,\ldots ,i_d\leq N} V(K_{i_1},\ldots
	,K_{i_d})t_{i_1}\ldots t_{i_d},\end{equation}
where the coefficients $V(K_{i_1},\ldots ,K_{i_d})$ are chosen to be invariant under permutations of their
arguments. The coefficient $V(K_{i_1},\ldots ,K_{i_d})$ is called the mixed volume of the $d$-tuple $(K_{i_1},\ldots ,K_{i_d})$.
We will often use the fact that $V$ is positive linear with respect to each of its arguments and that
$V(K,\ldots ,K)=|K|$ (the $d$-dimensional Lebesgue measure of $K$) for all $K\in {\cal K}_d$.

Steiner's formula is a special case of Minkowski's theorem. If $K\in {\cal K}_d$ then the volume of $K+tB_2^d$, $t>0$,
can be expanded as a polynomial in $t$:
\begin{equation}\label{eq:not-3}|K+tB_2^d|=\sum_{k=0}^d\binom{d}{k}W_k(K)t^k,\end{equation}
\noindent where $W_k(K):=V(K[d-k],B_2^d[k])$ is the $k$-th quermassintegral of $K$.
Moreover, for $k =1,\dots,d$, the $k$-th intrinsic volume of a convex body $L\subset \R^d$ is defined as
\begin{align*}
	V_k(L) = \binom{d}{k}\frac{V(L[k],B_2^d[d-k])}{\omega_{d-k}}.
\end{align*}
The Aleksandrov-Fenchel inequality states that if $K,L,K_3,\ldots ,K_d\in {\cal K}_d$, then
\begin{equation}\label{eq:not-4}V(K,L,K_3,\ldots ,K_d)^2\geq V(K,K,K_3,\ldots ,K_d) V(L,L,K_3,\ldots ,K_d).\end{equation}
In particular, this implies that the sequence $(W_0(K),\ldots ,W_d(K))$ is
log-concave. From the Aleksandrov-Fenchel inequality one can
recover the Brunn-Minkowski inequality as well as the following generalisation
for the quermassintegrals:
\begin{equation}\label{eq:not-6}W_k(K+L)^{\frac{1}{d-k}}\geq W_k(K)^{\frac{1}{d-k}}+W_k(L)^{\frac{1}{d-k}},\qquad k=0,\ldots ,d-1.\end{equation}
We write $S(K)$ for the surface area of $K$. From Steiner's formula and the definition of surface area we see that $S(K)=dW_1(K)$.
Finally, let us mention Kubota's integral formula
\begin{equation}\label{eq:not-7}W_k(K)=\frac{\omega_d}{\omega_{d-k}}\int_{G_{d,d-k}}
	|P_E(K)|\,d\nu_{d,d-k}(E),\qquad 1\leq k\leq d-1.\end{equation}
The case $k=1$ is Cauchy's surface area formula
\begin{equation}\label{eq:not-8}S(K)=\frac{\omega_d}{d\omega_{d-1}}\int_{S^{d-1}}|P_{u^{\perp }}(K)|\,d\sigma (u).\end{equation}
We refer to the books \cite{Gardner-book} and \cite{Schneider-book} for basic facts from the Brunn-Minkowski theory and to the books
\cite{AGA-book} and \cite{BGVV-book} for basic facts from asymptotic convex geometry.

\section{Principal minors and Szasz's inequality}\label{section:principal minors and szaszs inequality}
In this section we introduce some elementary tools from linear algebra, and use them to prove Theorems \ref{thm:vec maclaurin with squares}, \ref{thm:vec maclaurin with p = infty} and \ref{thm:vec maclaurin with p = 0}.

\paragraph{Vector-valued Maclaurin inequality with $p=2$}
Firstly, for any family of vectors $v_1\dots,v_d\in\R^d$, we write the square of each $k$-dimensional volume $|v_{i_1}\wedge \cdots v_{i_k}|^2$ as the determinant of a $k \times k$ submatrix of some fixed $d \times d$ matrix. Let us introduce the notion of a principal minor:
\begin{definition}[Principal minors]
	Let $M$ be an $n \times n$ matrix. For $S \subset [n]$, define $M_S$ to be submatrix constructed by removing rows and edges with indices not in $S$. The set of principal submatrices of $M$ is defined as $\{M_S \, |\, S\subset [n] \}$. Furthermore, the set of principal minors of $M$ is defined as $\{\det\left(M_S\right) \, |\, S\subset [n] \}$. For $1\le k\le n$, we define the principal $k$-submatrices and principal $k$-minors of $M$ by adding the condition that $|S| = k$.
\end{definition}
\noindent Let $A$ denote the square $d\times d$ matrix with columns $v_1,\dots,v_d$ and let $B$ be the $d \times k$ matrix with columns $v_1,\dots,v_k$ for some $1\le k\le d$, then we can write
\begin{align*}
	|v_1 \wedge \cdots \wedge v_k|^2 = \det(B^{T} B).
\end{align*}
A short proof of this identity can be found in \cite{vol_para_and_zonoids}. The matrix $B^{T} B$ is a principal $k$-submatrix of $A^{T}A$, and can be constructed by removing the last $(d-k)$ rows and columns. In this way we see that the sum of terms $|v_{i_1}\wedge \cdots \wedge v_{i_k}|^2$ over all $1\le i_1<\cdots <i_k\le d$ is equal to the sum of all principal $k$-minors of $A^{T}A$. The next lemma allows us to work with the sum of all principal minors of $A^TA$:
\begin{lemma}[Sum of principal minors]\label{lem:sum of principal minors equals symmetric poly of eigenvalues}
	Let $M$ be a $n \times n$ matrix with eigenvalues $\lambda_1,\dots,\lambda_n$ (not necessarily distinct). For $1\le k\le n$ we have that
	\begin{align*}
		\sum_{|S| = k}\det(M_S^{T}M_S) = \sum_{|S| = k} \prod_{i \in S}\lambda_{i} .
	\end{align*}
\end{lemma}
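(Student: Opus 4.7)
The plan is to prove the classical identity that the sum of the $k$-th principal minors of an $n\times n$ matrix equals the $k$-th elementary symmetric polynomial of its eigenvalues. Since $M_S$ is a $k\times k$ square principal submatrix, $\det(M_S^T M_S)=\det(M_S)^2$; the identity that is actually needed in the sequel (where $M=A^TA$ and a single principal $k$-minor of $M$ equals $|v_{i_1}\wedge\cdots\wedge v_{i_k}|^2$) is
\begin{align*}
\sum_{|S|=k}\det(M_S)=e_k(\lambda_1,\dots,\lambda_n),
\end{align*}
where $e_k$ denotes the $k$-th elementary symmetric polynomial of the eigenvalues of $M$ itself. So I will establish this version.

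The key observation is that both sides of this identity appear as the coefficient of $x^{n-k}$ in the characteristic polynomial $p_M(x)=\det(xI_n-M)$. On one hand, since $\lambda_1,\dots,\lambda_n$ are exactly the roots of $p_M$ counted with multiplicity,
\begin{align*}
p_M(x)=\prod_{i=1}^n(x-\lambda_i)=\sum_{k=0}^n(-1)^k e_k(\lambda_1,\dots,\lambda_n)\,x^{n-k}.
\end{align*}
On the other hand, I would expand $\det(xI_n-M)$ by multilinearity in its columns, writing the $i$-th column as $xe_i-Me_i$ and distributing; each resulting term is indexed by the subset $S\subseteq[n]$ of columns that contribute their $-Me_i$ part. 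After factoring out the $x^{n-|S|}$ coming from the columns outside $S$ and collapsing those columns (which are now standard basis vectors) via Laplace expansion, the remaining determinant reduces to $(-1)^{|S|}\det(M_S)$. Matching coefficients of $x^{n-k}$ in the two expansions yields the identity.

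The main subtlety is the sign bookkeeping in the column expansion; a slicker alternative I would take is a density/invariance argument. Both sides are continuous polynomial functions of the entries of $M$ and are invariant under conjugation $M\mapsto P^{-1}MP$, since conjugation preserves both the spectrum and (by the multiplicativity of the determinant and a direct check at the level of the characteristic polynomial) the sum of principal $k$-minors. By density of diagonalisable matrices it therefore suffices to verify the identity for $M=\mathrm{diag}(\lambda_1,\dots,\lambda_n)$, in which case every principal $k$-submatrix is itself diagonal, $\det(M_S)=\prod_{i\in S}\lambda_i$, and both sides of the identity are manifestly equal to $e_k(\lambda_1,\dots,\lambda_n)$.
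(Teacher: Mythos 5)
The paper gives no proof of this lemma at all --- it only cites Meyer's book --- and the characteristic-polynomial argument you outline (reading off the coefficient of $x^{n-k}$ in $\det(xI_n-M)$ once as $(-1)^k e_k(\lambda_1,\dots,\lambda_n)$ from the factorisation over the roots, and once as $(-1)^k\sum_{|S|=k}\det(M_S)$ from the multilinear expansion in the columns) is precisely the standard proof found there; your reduction to diagonal matrices by density of diagonalisable matrices is a correct alternative, though its invariance step is really the same coefficient identification in disguise. You are also right to correct the statement rather than prove it literally: since $M_S$ is a square principal submatrix, $\det(M_S^TM_S)=\det(M_S)^2$, and the identity as printed already fails for $n=k=1$, $M=(2)$. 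The identity actually used in the sequel is $\sum_{|S|=k}\det(M_S)=e_k(\lambda_1,\dots,\lambda_n)$ applied to the Gram matrix $M=A^TA$, whose eigenvalues are the nonnegative numbers fed into the classical Maclaurin inequality; accordingly, in the proof of Theorem \ref{thm:vec maclaurin with squares} the $\lambda_i$ should be read as the eigenvalues of $A^TA$, not of $A$.
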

\noindent A proof of this lemma can be found in \cite{meyer2000matrix}. Now we are ready to establish the vector-valued Maclaurin inequality with $p=2$:
\medskip

\begin{proof}[Proof of Theorem \ref{thm:vec maclaurin with squares}]
	As before, let $A$ denote the square matrix with columns $v_1,\dots,v_d$ and (not necessarily distinct) eigenvalues $\lambda_1,\dots,\lambda_d$.  Then for $1\le i_1<\cdots< i_k\le d$, define $B_{i_1,\dots,i_k}$ to be the $d \times k$ matrix with columns $v_{i_1},\dots,v_{i_k}$. Now we use Lemma \ref{lem:sum of principal minors equals symmetric poly of eigenvalues} along with Theorem \ref{thm:maclaurin inequalities}, which is precisely the classical Maclaurin inequality, to write
	\begin{align*}
		\left(\frac{\sum\limits_{1\le i_1 < \cdots < i_{k}\le d} |v_{i_1}\wedge \cdots\wedge v_{i_{k}}|^2 }{\binom{d}{k}}\right)^{\frac{1}{k}}&= \left(\frac{\sum\limits_{1\le i_1 < \cdots < i_{k}\le d} \det\left(B_{i_1,\dots,i_{k}}^{T}B_{i_1,\dots,i_{k}} \right)}{\binom{d}{k}}\right)^{\frac{1}{k}}\\ &= \left(\frac{\sum\limits_{1\le i_1 < \cdots < i_{k}\le d} \lambda_{i_1}\cdots\lambda_{i_{k}} }{\binom{d}{k}}\right)^{\frac{1}{k}} \\ &\le \left(\frac{\sum\limits_{1\le i_1 < \cdots < i_{k-1}\le d} \lambda_{i_1}\cdots\lambda_{i_{k-1}} }{\binom{d}{k-1}}\right)^{\frac{1}{k-1}}\\ &= \left(\frac{\sum\limits_{1\le i_1 < \cdots < i_{k-1}\le d} \det\left(B_{i_1,\dots,i_{k-1}}^{T}B_{i_1,\dots,i_{k-1}} \right) }{\binom{d}{k-1}}\right)^{\frac{1}{k-1}}\\ &= \left(\frac{\sum\limits_{1\le i_1 < \cdots < i_{k-1}\le d}|v_{i_1}\wedge \cdots\wedge v_{i_{k-1}}|^2}{\binom{d}{k-1}}\right)^{\frac{1}{k-1}}.
	\end{align*}
	This proves the desired result. 
\end{proof}

\paragraph{Endpoint cases $p=0$ and $p=\infty$}
To begin, let us state a result of Szasz regarding principal minors:
\begin{lemma}[Szasz's inequality]\label{ineq:Szasz}
	Let $M$ be some $n \times n$ matrix. For $1< k < n$ the following inequality holds
	\begin{align*}
		\left(\prod_{|A| = k} \det(M_A) \right)^{\frac{1}{\binom{n-1}{k-1}}} \le \left(\prod_{|B| = k-1} \det(M_B) \right)^{\frac{1}{\binom{n-1}{k-2}}} .
	\end{align*}
\end{lemma}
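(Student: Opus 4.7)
Since the lemma compares products of principal minors raised to fractional powers, it only has content when every such product is non-negative, so I would take the standard reading that $M$ is positive semi-definite (the general case, if intended, reduces to the positive definite one by continuity). Write $P_k := \prod_{|A|=k}\det(M_A)$ and $p_k := \log P_k$; the stated inequality is then equivalent to $(k-1)\,p_k \leq (n-k+1)\,p_{k-1}$, and my plan is to prove this by induction on $k$, starting from the base case $k = 2$. Here $P_2 = \prod_{i<j}(m_{ii}m_{jj} - m_{ij}^2)$, and since $m_{ij}^2 \geq 0$ we have $P_2 \leq \prod_{i<j}m_{ii}m_{jj} = P_1^{n-1}$, because each diagonal entry appears in exactly $n-1$ unordered pairs; this is precisely Szasz at level $2$.

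For the inductive step, the main tool is the Hadamard--Fischer--Koteljanskii inequality: for positive semi-definite $M$ and any subsets $A, B \subseteq [n]$,
\[
\det(M_{A\cup B})\,\det(M_{A\cap B}) \leq \det(M_A)\,\det(M_B).
\]
I would apply this with $A = T\cup\{a\}$ and $B = T\cup\{b\}$, where $T$ is a $(k-1)$-subset of $[n]$ and $a \neq b$ lie in $[n]\setminus T$; then $|A\cup B| = k+1$, $|A\cap B| = k-1$, and both $A$ and $B$ are $k$-subsets. Taking the product of the resulting inequalities over all triples $(T,\{a,b\})$ and counting multiplicities --- each $(k+1)$-minor appears $\binom{k+1}{2}$ times on the left, each $(k-1)$-minor appears $\binom{n-k+1}{2}$ times on the left, and each $k$-minor appears $k(n-k)$ times on the right --- I would obtain the three-term log-concavity relation
\[
\frac{k(k+1)}{2}\,p_{k+1} + \frac{(n-k)(n-k+1)}{2}\,p_{k-1} \leq k(n-k)\,p_k.
\]
The inductive hypothesis $(k-1)p_k \leq (n-k+1)p_{k-1}$ rearranges to $\tfrac{(n-k)(n-k+1)}{2}\,p_{k-1} \geq \tfrac{(n-k)(k-1)}{2}\,p_k$, and substituting this lower bound for the middle term of the three-term relation and simplifying yields $k\,p_{k+1} \leq (n-k)\,p_k$, which is exactly Szasz at level $k+1$, closing the induction.

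The main obstacle I anticipate is the combinatorial bookkeeping when passing from a single Koteljanskii inequality to the product inequality: keeping track of how many times each $(k+1)$-, $k$-, and $(k-1)$-minor arises requires careful handling of ordered versus unordered pairs drawn from $[n]\setminus T$. No delicacy of signs enters the subsequent log-algebra, since all manipulations are linear combinations with explicit positive coefficients; so once the multiplicities are correctly established, the induction proceeds cleanly regardless of whether individual $p_k$ happen to be negative.
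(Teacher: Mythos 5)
The paper itself does not prove Szász's inequality; it refers to Gover and Krikorian \cite{vol_para_and_zonoids}, so there is no internal proof to compare against. Your argument, however, is correct and self-contained (under the positive semi-definite reading, which is the one the paper needs, since it only ever applies the lemma to Gram matrices $A^TA$). The reformulation $(k-1)p_k\le(n-k+1)p_{k-1}$ is right, the base case $P_2\le P_1^{n-1}$ is correct, and your multiplicity counts in the Koteljanski\u{\i} product check out: each $(k+1)$-minor occurs $\binom{k+1}{2}$ times and each $(k-1)$-minor $\binom{n-k+1}{2}$ times on the left, and each $k$-minor $k(n-k)$ times on the right (choose which of its $k$ elements is $a$, and which of the $n-k$ outside elements is $b$). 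Substituting the inductive hypothesis into the three-term relation and simplifying does close to $kp_{k+1}\le(n-k)p_k$.

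One structural remark: if you set $q_j:=p_j/\binom{n}{j}$, your three-term relation is exactly concavity $q_{k+1}+q_{k-1}\le 2q_k$ with $q_0=0$, and the passage from this to monotonicity of $q_k/k$ is precisely the content of Lemma~\ref{lem:log-concavity implies monotonicity} in this paper (applied to $a_j=e^{q_j}=P_j^{1/\binom{n}{j}}$). So you could cite that lemma in place of re-running the induction, which would streamline your write-up and make the link to the paper's log-concavity toolkit explicit. Your appeal to continuity to pass from $M$ positive definite to positive semi-definite is fine.
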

A proof of this using elementary methods can be found in \cite{vol_para_and_zonoids}. Simple applications of this lemma allow us to deduce the vector-valued Maclaurin inequality with $p=0$ and $p=\infty$.
\begin{proof}[Proof of Theorem \ref{thm:vec maclaurin with p = 0}.]
	Let $A$ denote the $d \times m$ matrix with columns $v_1,\dots,v_m$ and let $A_{i_1,\dots,i_k}$ be the $d \times k$ matrix with colums $v_{i_1},\dots,v_{i_k}$ for some $1\le k\le d$, then we have
	\begin{align*}
		|v_{i_1} \wedge \cdots \wedge v_{i_k}|^2 = \det(A_{i_1,\dots,i_k}^{T} A_{i_1,\dots,i_k}).
	\end{align*}
	As we mentioned earlier, $A_{i_1,\dots,i_k}^{T} A_{i_1,\dots,i_k}$ can also be seen as a principal submatrix of $A^TA$, which is an $m\times m$ matrix. Hence, by Szasz's lemma we have
	\begin{align*}
		\left(\prod\limits_{1\le i_1 < \cdots < i_{k}\le m} |v_{i_1}\wedge \cdots\wedge v_{i_{k}}|^2 \right)^{\frac{1}{\binom{m-1}{k-1}}}\le \left(\prod\limits_{1\le i_1 < \cdots < i_{k-1}\le m} |v_{i_1}\wedge \cdots\wedge v_{i_{k-1}}|^2 \right)^{\frac{1}{\binom{m-1}{k-2}}}.
	\end{align*}
	Taking square roots, and noting that $\binom{m-1}{k-1} = \binom{m}{k} \cdot \frac{k}{m}$ and 
	$\binom{m-1}{k-2} = \binom{m}{k-1} \cdot \frac{k-1}{m}$, the above inequality can be written as
	\begin{align}\label{ineq:vec maclaurin geometric mean instead of arithmetic mean}
		\left(\left(\prod\limits_{1\le i_1 < \cdots < i_{k}\le m} |v_{i_1}\wedge \cdots\wedge v_{i_{k}}| \right)^{\frac{1}{\binom{m}{k}}}\right)^{\frac{1}{k}}\le \left(\left(\prod\limits_{1\le i_1 < \cdots < i_{k-1}\le m} |v_{i_1}\wedge \cdots\wedge v_{i_{k-1}}| \right)^{\frac{1}{\binom{m}{k-1}}}\right)^{\frac{1}{k-1}},
	\end{align}
as required.
\end{proof}
\begin{remark}
In \eqref{ineq:vec maclaurin geometric mean instead of arithmetic mean} we see that we have geometric means appearing inside the first set of parenthesis on both sides. Intriguingly, simply replacing these geometric means by arithmetic means reveals the vector valued Maclaurin inequality with $p=1$. So, if we think of the vector-valued Maclaurin inequality with $p=1$ as a chain of inequalities for a sequence of arithmetic means, then (\ref{ineq:vec maclaurin geometric mean instead of arithmetic mean}) can be thought of as an analogous chain of inequalities for the corresponding geometric means.
\end{remark}
The case for $p=\infty$ also follows directly from Szasz's inequality.
\begin{proof}[Proof of Theorem \ref{thm:vec maclaurin with p = infty}]
	Let $i_1<\cdots<i_k\leq m$ be the indices where the left hand side is maximised and let $B$ be the matrix with columns $v_{i_1},\ldots, v_{i_k}$. Now, using Szasz's inequality for $M = B^{T}B$ and $n=k$ we get
	\begin{align*}
		|v_{i_1}\wedge\cdots\wedge v_{i_{k}}| &\le \left(\prod_{j = 1}^k |v_{i_1}\wedge\cdots\wedge \widehat{v_{i_j}}\wedge\cdots\wedge v_{i_{k}}| \right)^{\frac{1}{k-1}}\\ &\le \left(\prod_{j = 1}^k \left(\max\limits_{1\le i_1 < \cdots < i_{k-1}\le d}|v_{i_1}\wedge\cdots\wedge v_{i_{k-1}}|\right) \right)^{\frac{1}{k-1}} \\&= \left(\max\limits_{1\le i_1 < \cdots < i_{k-1}\le d}|v_{i_1}\wedge\cdots\wedge v_{i_{k-1}}|\right)^{\frac{k}{k-1}},
	\end{align*}
	which concludes the proof.
\end{proof}

\section{Partial results for $p=1$ and monotonicity argument}\label{section:partial results for p=1 and monotonicity argument}
In the next section we prove the special cases of vector-valued Maclaurin inequalities with $p=1$ and $m=d$ listed in Theorem \ref{thm:vec maclaurin list of known p=1 cases}. Our method is somewhat inspired by a monotonicity argument given in \cite{inequalities} to prove the classical Maclaurin inequality. Given vectors $v_1,\dots,v_d \in \R^d$, we attempt to construct a second family of orthogonal vectors $\tilde{v}_1,\dots,\tilde{v}_d \in \R^d$, such that
\begin{align*}
	\left(\frac{\sum\limits_{1\le i_1 < \cdots < i_{k}\le d}|v_{i_1}\wedge\cdots\wedge v_{i_{k}}|}{\binom{d}{k}}\right)^{\frac{1}{k}} \le
	\left(\frac{\sum\limits_{1\le i_1 < \cdots < i_{k}\le d}|\tilde{v}_{i_1}\wedge\cdots\wedge \tilde{v}_{i_{k}}|}{\binom{d}{k}}\right)^{\frac{1}{k}},
\end{align*}
and
\begin{align*}
	\left(\frac{\sum\limits_{1\le i_1 < \cdots < i_{k-1}\le d}|v_{i_1}\wedge \cdots\wedge v_{i_{k-1}}|}{\binom{d}{k-1}}\right)^{\frac{1}{k-1}} \ge
	\left(\frac{\sum\limits_{1\le i_1 < \cdots < i_{k-1}\le d}|\tilde{v}_{i_1}\wedge \cdots\wedge \tilde{v}_{i_{k-1}}|}{\binom{d}{k-1}}\right)^{\frac{1}{k-1}}.
\end{align*}
If such an orthogonal family exists, then applying Theorem \ref{thm:maclaurin inequalities} with positive numbers $\|v_1\|,\dots,\|v_d\|$, we can write
\begin{align*}
	\left(\frac{\sum\limits_{1\le i_1 < \cdots < i_{k}\le d}|v_{i_1}\wedge\cdots\wedge v_{i_{k}}|}{\binom{d}{k}}\right)^{\frac{1}{k}}
	&\le
	\left(\frac{\sum\limits_{1\le i_1 < \cdots < i_{k}\le d}|\tilde{v}_{i_1}\wedge\cdots\wedge \tilde{v}_{i_{k}}|}{\binom{d}{k}}\right)^{\frac{1}{k}}\\ 
	&=
	\left(\frac{\sum\limits_{1\le i_1 < \cdots < i_{k}\le d}\|\tilde{v}_{i_1}\|\cdots\| \tilde{v}_{i_{k}}\|}{\binom{d}{k}}\right)^{\frac{1}{k}}\\ 
	&\le
	\left(\frac{\sum\limits_{1\le i_1 < \cdots < i_{k-1}\le d}\|\tilde{v}_{i_1}\|\cdots\| \tilde{v}_{i_{k-1}}\|}{\binom{d}{k-1}}\right)^{\frac{1}{k-1}}\\
	&=
		\left(\frac{\sum\limits_{1\le i_1 < \cdots < i_{k-1}\le d}|\tilde{v}_{i_1}\wedge \cdots\wedge \tilde{v}_{i_{k-1}}|}{\binom{d}{k-1}}\right)^{\frac{1}{k-1}}\\
	&\le
		\left(\frac{\sum\limits_{1\le i_1 < \cdots < i_{k-1}\le d}|v_{i_1}\wedge \cdots\wedge v_{i_{k-1}}|}{\binom{d}{k-1}}\right)^{\frac{1}{k-1}}.
\end{align*}
In order to streamline the argument slightly, we introduce the following convenient notation
\begin{align*}
	S_k(v_{1},\dots,v_{d}) := \sum\limits_{\{i_1, \dots ,i_{k}\}\subset[d]}|v_{i_1}\wedge \cdots\wedge v_{i_{k}}|.
\end{align*}
Notice that by symmetry, instead of constructing a whole family of orthogonal vectors, it suffices to construct $\tilde{v}_1 \in \{v_2,\dots,v_d \}^{\perp}$ such that
\begin{align*}
	S_k(v_{1},\dots,v_{d}) \le S_k(\tilde{v}_{1},v_2,\dots,v_{d}),
\end{align*}
and
\begin{align*}
	S_{k-1}(v_{1},\dots,v_{d}) \ge S_{k-1}(\tilde{v}_{1},v_2,\dots,v_{d}).
\end{align*}
Since we only need to worry about the terms involving $v_1$, these last two conditions can be written respectively as
\begin{align*}
	\sum\limits_{\{i_1, \dots ,i_{k-1}\}\subset[d]\setminus 1}|v_1\wedge v_{i_1}\wedge\cdots\wedge v_{i_{k-1}}| \le \sum\limits_{\{i_1, \dots ,i_{k-1}\}\subset[d]\setminus 1}|\tilde{v}_1\wedge v_{i_1}\wedge\cdots\wedge v_{i_{k-1}}|,
\end{align*}
and
\begin{align*}
	\sum\limits_{\{i_1, \dots ,i_{k-2}\}\subset[d]\setminus 1}| v_1\wedge v_{i_1}\wedge\cdots\wedge v_{i_{k-2}}| \ge \sum\limits_{\{i_1, \dots ,i_{k-2}\}\subset[d]\setminus 1}|  \tilde{v}_1\wedge v_{i_1}\wedge\cdots\wedge v_{i_{k-2}}|.
\end{align*}
Note that $\tilde{v}_1$ is orthogonal to the vectors $v_2,\dots,v_d$, so for any $\{v_{i_1},\dots,v_{i_r}\} \subset \{v_2,\dots,v_d\}$
\begin{align*}
	|\tilde{v}_1\wedge v_{i_1}\wedge\cdots\wedge v_{i_r}| = \|\tilde{v}_1\||v_{i_1}\wedge\cdots\wedge v_{i_r}|.
\end{align*}
Hence we can rewrite the previous two inequalities as follows,
\begin{align*}
	\frac{\sum\limits_{\{i_1, \dots ,i_{k-1}\}\subset[d]\setminus 1}|v_1\wedge v_{i_1}\wedge\cdots\wedge v_{i_{k-1}}| }{\sum\limits_{\{i_1, \dots ,i_{k-1}\}\subset[d]\setminus 1}|v_{i_1}\wedge\cdots\wedge v_{i_{k-1}}| } \le \|\tilde{v}_1\| \le \frac{\sum\limits_{\{i_1, \dots ,i_{k-2}\}\subset[d]\setminus 1}| v_1\wedge v_{i_1}\wedge\cdots\wedge v_{i_{k-2}}|}{\sum\limits_{\{i_1, \dots ,i_{k-2}\}\subset[d]\setminus 1}| v_{i_1}\wedge\cdots\wedge v_{i_{k-2}}|}.
\end{align*}
So the question is can choose a length $\|\tilde{v}_1\|$ satisfying the above? Let us summarise what we have just derived with the following result:
\begin{theorem}\label{thm:vec maclaurin p=1 reduction}
	Fix $1<k\le d$. Suppose that for any $v_1,\dots,v_d \in \R^d$
	\begin{align}\label{ineq:vec maclaurin p=1 reduction}
		\frac{\sum\limits_{\{i_1, \dots ,i_{k-1}\}\subset[d]\setminus 1}|v_1\wedge v_{i_1}\wedge\cdots\wedge v_{i_{k-1}}| }{\sum\limits_{\{i_1, \dots ,i_{k-1}\}\subset[d]\setminus 1}|v_{i_1}\wedge\cdots\wedge v_{i_{k-1}}| } \le  \frac{\sum\limits_{\{i_1, \dots ,i_{k-2}\}\subset[d]\setminus 1}| v_1\wedge v_{i_1}\wedge\cdots\wedge v_{i_{k-2}}|}{\sum\limits_{\{i_1, \dots ,i_{k-2}\}\subset[d]\setminus 1}| v_{i_1}\wedge\cdots\wedge v_{i_{k-2}}|},
	\end{align}
	where we interpret the $k=2$ case as
	\begin{align*}
		\frac{\sum\limits_{i \in [d]\setminus 1}| v_1\wedge v_i|}{\sum\limits_{i \in [d]\setminus 1}\|v_i\|} \le \|v_1\|.
	\end{align*}
	Then, for any $\omega_1,\dots,\omega_d \in \R^d$, we have
	\begin{align*}
		\left(\frac{S_k(\omega_1,\dots,\omega_d)}{\binom{d}{k}}\right)^{\frac{1}{k}} \le \left(\frac{S_{k-1}(\omega_1,\dots,\omega_d)}{\binom{d}{k-1}}\right)^{\frac{1}{k-1}}.
	\end{align*}
\end{theorem}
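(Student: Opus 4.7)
The plan is to formalize the monotonicity argument sketched in the paragraphs preceding the statement. Given the hypothesis, I iteratively replace the vectors $v_1, \dots, v_d$ one at a time by an orthogonal family $\tilde v_1, \dots, \tilde v_d$ such that $S_k$ weakly increases and $S_{k-1}$ weakly decreases at each step. Once the final family is pairwise orthogonal, each wedge $|\tilde v_{i_1} \wedge \cdots \wedge \tilde v_{i_k}|$ factors as $\|\tilde v_{i_1}\| \cdots \|\tilde v_{i_k}\|$, so $S_k(\tilde v_1, \dots, \tilde v_d)$ equals the $k$-th elementary symmetric polynomial in the norms $\|\tilde v_1\|, \dots, \|\tilde v_d\|$. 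The classical Maclaurin inequality (Theorem \ref{thm:maclaurin inequalities}) applied to these positive reals then closes the chain.

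For the single-step reduction, I fix a $d$-tuple and seek $\tilde v_1 \in \{v_2, \dots, v_d\}^\perp$. As derived in the text preceding the statement, the monotonicity conditions $S_k(v_1, \dots, v_d) \le S_k(\tilde v_1, v_2, \dots, v_d)$ and $S_{k-1}(v_1, \dots, v_d) \ge S_{k-1}(\tilde v_1, v_2, \dots, v_d)$ hold exactly when $\|\tilde v_1\| \in [L, U]$, with $L$ and $U$ the left- and right-hand sides of (\ref{ineq:vec maclaurin p=1 reduction}). The hypothesis asserts $L \le U$, so this interval is non-empty, and since $\{v_2, \dots, v_d\}^\perp$ has dimension at least $1$ in $\R^d$, a vector $\tilde v_1$ of the required norm exists. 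To iterate, I apply the same reduction to $(v_2, \tilde v_1, v_3, \dots, v_d)$ with $v_2$ playing the role of the ``first'' vector; the hypothesis is quantified over \emph{all} $d$-tuples in $\R^d$ and so applies verbatim, producing $\tilde v_2 \perp \{\tilde v_1, v_3, \dots, v_d\}$ with the same monotonicity. Proceeding through all $d$ vectors yields a pairwise orthogonal family, since each $\tilde v_j$ is constructed to be orthogonal to every previously constructed $\tilde v_i$ as well as to the yet-untouched $v_{j+1}, \dots, v_d$.

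Concatenating the monotonicity inequalities with the classical Maclaurin inequality yields
\begin{align*}
\left(\frac{S_k(v_1, \dots, v_d)}{\binom{d}{k}}\right)^{1/k}
&\le \left(\frac{S_k(\tilde v_1, \dots, \tilde v_d)}{\binom{d}{k}}\right)^{1/k} \\
&\le \left(\frac{S_{k-1}(\tilde v_1, \dots, \tilde v_d)}{\binom{d}{k-1}}\right)^{1/(k-1)} \\
&\le \left(\frac{S_{k-1}(v_1, \dots, v_d)}{\binom{d}{k-1}}\right)^{1/(k-1)},
\end{align*}
which is the desired vector-valued Maclaurin inequality. The $k=2$ case uses the degenerate form of the hypothesis stated in the theorem, which is immediate from $|v_1 \wedge v_i| \le \|v_1\|\|v_i\|$. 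The main ``obstacle'' here is essentially notational: one must check that the sandwich condition on $\|\tilde v_1\|$ aligns exactly with (\ref{ineq:vec maclaurin p=1 reduction}), and track the iteration so that successive swaps preserve orthogonality with the already-constructed vectors. The genuine difficulty --- verifying the hypothesis (\ref{ineq:vec maclaurin p=1 reduction}) itself --- is deferred to the subsequent case-by-case analysis.
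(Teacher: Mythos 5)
Your proof is correct and reproduces the paper's argument: the paper sketches the same monotonicity-by-orthogonalization approach in the paragraphs immediately preceding the theorem, reduces to constructing a single $\tilde v_1$ in $\{v_2,\dots,v_d\}^\perp$ whose norm lies in the interval $[L,U]$ given by the two sides of (\ref{ineq:vec maclaurin p=1 reduction}), and invokes symmetry for the remaining steps. Your write-up merely makes the iteration explicit — relabeling so that $v_2,v_3,\dots$ each play the role of the distinguished vector and checking that successive replacements preserve orthogonality with the already-constructed vectors — which the paper compresses into the phrase ``by symmetry.''
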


\subsection{Proof of Theorem \ref{thm:vec maclaurin list of known p=1 cases}}
Now we will prove various special cases of (\ref{ineq:vec maclaurin p=1 reduction}) which then imply the corresponding cases listed in Theorem \ref{thm:vec maclaurin list of known p=1 cases}:
\begin{lemma}\label{lem:endpoints k=2 and k=d vector valued maclaurin inequalities using projection method}
	If $k=2$ or $k=d$, then $(\ref{ineq:vec maclaurin p=1 reduction})$ holds for arbitrary $v_1,\dots,v_d \in\R^d$.
\end{lemma}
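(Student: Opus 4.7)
For $k=2$, after unravelling the author's convention for the empty wedge product, the inequality reduces to
\begin{align*}
	\sum_{i=2}^{d}|v_1\wedge v_i| \le \|v_1\|\sum_{i=2}^{d}\|v_i\|,
\end{align*}
which follows termwise from the Hadamard-type bound $|v_1\wedge v_i|\le \|v_1\|\,\|v_i\|$; this is the easy case.

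For $k=d$, I would first notice that each of the two sums on the left-hand side of \eqref{ineq:vec maclaurin p=1 reduction} contains only one term, since $[d]\setminus\{1\}$ has exactly $d-1$ elements; thus the left-hand side equals $|v_1\wedge v_2\wedge\cdots\wedge v_d|/|v_2\wedge\cdots\wedge v_d|$, while each sum on the right-hand side is indexed by a choice of an omitted $j\in\{2,\dots,d\}$. My plan is then to decompose $v_1$ relative to the hyperplane $W=\mathrm{span}(v_2,\dots,v_d)$. Assuming $v_2,\dots,v_d$ are linearly independent (the degenerate case follows by a continuity argument), write $v_1 = v_1^{\parallel}+v_1^{\perp}$ with $v_1^{\parallel}\in W$ and $v_1^{\perp}\in W^{\perp}$. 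The left-hand side then collapses to $\|v_1^{\perp}\|$, since $|v_1\wedge v_2\wedge\cdots\wedge v_d| = \|v_1^{\perp}\|\cdot |v_2\wedge\cdots\wedge v_d|$.

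For each $j\in\{2,\dots,d\}$, expanding by linearity gives
\begin{align*}
	v_1\wedge v_2\wedge\cdots\wedge\widehat{v_j}\wedge\cdots\wedge v_d = v_1^{\parallel}\wedge v_2\wedge\cdots\wedge\widehat{v_j}\wedge\cdots\wedge v_d + v_1^{\perp}\wedge v_2\wedge\cdots\wedge\widehat{v_j}\wedge\cdots\wedge v_d,
\end{align*}
and the two summands lie in the orthogonal subspaces $\Lambda^{d-1}(W)$ and $W^{\perp}\wedge\Lambda^{d-2}(W)$ of $\Lambda^{d-1}(\R^d)$ respectively. Pythagoras, together with the Gram-determinant identity $|v_1^{\perp}\wedge v_2\wedge\cdots\wedge\widehat{v_j}\wedge\cdots\wedge v_d| = \|v_1^{\perp}\|\cdot|v_2\wedge\cdots\wedge\widehat{v_j}\wedge\cdots\wedge v_d|$ (valid because $v_1^{\perp}$ is orthogonal to every $v_i$), then yields
\begin{align*}
	|v_1\wedge v_2\wedge\cdots\wedge\widehat{v_j}\wedge\cdots\wedge v_d| \ge \|v_1^{\perp}\|\cdot|v_2\wedge\cdots\wedge\widehat{v_j}\wedge\cdots\wedge v_d|.
\end{align*}
Summing over $j$ and dividing by $\sum_j|v_2\wedge\cdots\wedge\widehat{v_j}\wedge\cdots\wedge v_d|$ gives a right-hand side bounded below by $\|v_1^{\perp}\|$, which matches the left-hand side. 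The only real conceptual step in this plan is the orthogonal decomposition $\Lambda^{d-1}(\R^d) = \Lambda^{d-1}(W)\oplus\bigl(W^{\perp}\wedge\Lambda^{d-2}(W)\bigr)$ induced by $\R^d = W\oplus W^{\perp}$; beyond that no combinatorial manipulation of the index set is required, and each endpoint case is essentially a one-liner.
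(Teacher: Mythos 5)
Your proposal is correct and takes essentially the same route as the paper: the $k=2$ case is the identical termwise Hadamard bound, and for $k=d$ your Pythagorean lower bound $|v_1\wedge\cdots\wedge\widehat{v_j}\wedge\cdots\wedge v_d|\ge\|v_1^{\perp}\|\,|v_2\wedge\cdots\wedge\widehat{v_j}\wedge\cdots\wedge v_d|$ is just another way of stating the paper's observation that $\|\pi_{\{v_2,\dots,v_d\}^{\perp}}v_1\|\le\|\pi_{\{v_2,\dots,\widehat{v_j},\dots,v_d\}^{\perp}}v_1\|$, after which both arguments conclude with the same summation (mediant) step. The only cosmetic difference is that you justify the termwise inequality via the orthogonal splitting of $\Lambda^{d-1}(\R^d)$ rather than via monotonicity of projection norms.
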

\begin{proof}
	To begin, let us deal with the case where $k=d$. Observe that for $2\le i \le d$ we have
	\begin{align*}
		\frac{|v_1\wedge \dots \wedge v_d|}{|v_2\wedge \dots \wedge v_{d}|} = \|\pi_{\{v_2,\dots,v_d\}^{\perp}}v_1\| \le \|\pi_{\{v_2,\dots,\widehat{v_i},\dots,v_d\}^{\perp}}v_1\| = \frac{|v_1\wedge \dots\wedge \widehat{v_i} \wedge \dots \wedge v_{d}|}{|v_2\wedge \dots\wedge \widehat{v_i} \wedge \dots \wedge v_{d}|}.
	\end{align*}
	Rearranging this gives
	\begin{align*}
		|v_1\wedge \dots \wedge v_d||v_2\wedge \dots\wedge \widehat{v_i} \wedge \dots \wedge v_{d}| \le |v_1\wedge \dots\wedge \widehat{v_i} \wedge \dots \wedge v_{d}||v_2\wedge \dots \wedge v_{d}|.
	\end{align*}
	Summing over $i$ yields
	\begin{align*}
		\sum_{i=2}^d |v_1\wedge \dots \wedge v_d||v_2\wedge \dots\wedge \widehat{v_i} \wedge \dots \wedge v_{d}| \le  \sum_{i=2}^d |v_1\wedge \dots\wedge \widehat{v_i} \wedge \dots \wedge v_{d}||v_2\wedge \dots \wedge v_{d}|
	\end{align*}
	which implies that
	\begin{align*}
		\frac{|v_1\wedge \dots \wedge v_d|}{|v_2\wedge \dots \wedge v_{d}|} \le \frac{\sum_{i=2}^d |v_1\wedge \dots\wedge \widehat{v_i} \wedge \dots \wedge v_{d}|}{\sum_{i=2}^d|v_2\wedge \dots\wedge \widehat{v_i} \wedge \dots \wedge v_{d}|},
	\end{align*}
	which is exactly $(\ref{ineq:vec maclaurin p=1 reduction})$ for $k=d$. Now for the case where $k=2$. Simply note that
	\begin{align*}
		\sum_{i=2}^d |v_1\wedge v_i| \le \|v_1\| \sum_{i=2}^d \|v_i\|,
	\end{align*}
	which immediately gives
	\begin{align*}
		\frac{\sum_{i=2}^d |v_1\wedge v_i|}{\sum_{i=2}^d \|v_i\|} \le \|v_1\|,
	\end{align*}
which concludes the proof.
\end{proof}
Next we deal with the case for $k=3$, which requires a little more work:
\begin{lemma}\label{lem:k=3 vector valued maclaurin inequalities using projection method}
	If $k=3$, then $(\ref{ineq:vec maclaurin p=1 reduction})$ holds in all dimensions $d$.
\end{lemma}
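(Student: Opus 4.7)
Clearing denominators, the $k=3$ case of inequality (\ref{ineq:vec maclaurin p=1 reduction}) is equivalent to
\begin{align*}
\sum_{\{i,j\}\subset[d]\setminus 1}|v_1\wedge v_i\wedge v_j|\cdot \sum_{k\in[d]\setminus 1}\|v_k\| \;\le\; \sum_{\{i,j\}\subset[d]\setminus 1}|v_i\wedge v_j|\cdot \sum_{k\in[d]\setminus 1}|v_1\wedge v_k|.
\end{align*}
Both sides expand into a triple sum indexed by $(\{i,j\},k)$, and I would regroup those triples according to whether $k\in\{i,j\}$ or $k\notin\{i,j\}$, splitting the claim into two self-contained pieces.

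The diagonal contributions (those with $k\in\{i,j\}$) can be matched pointwise using the projection inequality $|v_1\wedge v_i\wedge v_j|\,\|v_i\|\le |v_1\wedge v_i|\,|v_i\wedge v_j|$, which expresses nothing more than $d(v_1,\mathrm{span}(v_i,v_j))\le d(v_1,\mathrm{span}(v_i))$, exactly the bound already used in the $k=d$ case of Lemma~\ref{lem:endpoints k=2 and k=d vector valued maclaurin inequalities using projection method}. The off-diagonal contributions (those with $k\notin\{i,j\}$) reassemble, triple by triple, into the atomic four-vector inequality
\begin{align*}
&|v_1\wedge v_a\wedge v_b|\,\|v_c\|+|v_1\wedge v_a\wedge v_c|\,\|v_b\|+|v_1\wedge v_b\wedge v_c|\,\|v_a\|\\
&\qquad \le |v_a\wedge v_b|\,|v_1\wedge v_c|+|v_a\wedge v_c|\,|v_1\wedge v_b|+|v_b\wedge v_c|\,|v_1\wedge v_a|
\end{align*}
for each unordered triple $\{a,b,c\}\subset[d]\setminus\{1\}$. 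Summing both pieces would then recover the lemma.

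The atomic four-vector inequality is the main obstacle. Because it involves only the four vectors $v_1,v_a,v_b,v_c$, one may restrict to their span and assume we are working in $\R^4$. I would then choose coordinates so that $v_1=\|v_1\|e_1$ and write $v_\ell=a_\ell e_1+V_\ell$ with $V_\ell\in e_1^\perp\cong\R^3$. In this frame $|v_1\wedge v_\ell\wedge v_m|=\|v_1\|\,|V_\ell\wedge V_m|$ and $|v_1\wedge v_\ell|=\|v_1\|\,\|V_\ell\|$, while the Pythagorean identity
\[
|v_\ell\wedge v_m|^2\;=\;|V_\ell\wedge V_m|^2+\|a_\ell V_m-a_m V_\ell\|^2
\]
splits each 2-wedge into an orthogonal and a parallel contribution. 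The atomic inequality then becomes a Minkowski-type comparison of sums of $\sqrt{A^2+B^2}$ quantities in $\R^3$, with equality when $a_a=a_b=a_c=0$ (the case where $v_a,v_b,v_c$ already lie in $v_1^\perp$). The remaining step is to verify that turning on the $a_\ell$'s only helps the right-hand side; I would expect this to reduce to a careful pairing of the three summands on each side together with an application of Cauchy--Schwarz or Minkowski in a low-dimensional Euclidean space, and getting that bookkeeping right is where the genuine technical work lies.
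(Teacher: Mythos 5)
Your setup is correct and matches the paper's: clearing denominators, splitting the triple sum according to whether the singleton index $c$ lies in the pair $\{i,j\}$, handling the coincidence terms pointwise by the projection bound $|v_1\wedge v_i\wedge v_j|\,\|v_i\|\le|v_1\wedge v_i|\,|v_i\wedge v_j|$, and reassembling the remaining terms into the atomic four-vector inequality for each triple $\{a,b,c\}\subset[d]\setminus\{1\}$. You've also correctly identified that inequality as the entire content of the lemma. But you don't prove it, and the sketch you offer has a flaw.

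After putting $v_1=\|v_1\|e_1$ and $v_\ell=a_\ell e_1+V_\ell$, the quantity $\|v_c\|=\sqrt{a_c^2+\|V_c\|^2}$ on the \emph{left-hand} side also increases when the $a_\ell$'s are switched on, so it is not true that "turning on the $a_\ell$'s only helps the right-hand side." Dividing by $\|v_1\|$, the atomic inequality becomes
\begin{align*}
\sum_{\rm cyc}|V_a\wedge V_b|\sqrt{a_c^2+\|V_c\|^2}\ \le\ \sum_{\rm cyc}\sqrt{|V_a\wedge V_b|^2+\|a_a V_b-a_b V_a\|^2}\,\|V_c\|,
\end{align*}
and the naive term-by-term pairing would require $|V_a\wedge V_b|\,|a_c|\le\|a_aV_b-a_bV_a\|\,\|V_c\|$, which fails already when $a_a=a_b=0$ and $a_c\ne 0$. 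So the inequality needs a genuine mixing of the three summands, and no Cauchy--Schwarz/Minkowski step is exhibited; this is a real gap. The paper proves the atomic inequality quite differently: after a WLOG ordering $\frac{|v_1\wedge v_2|}{\|v_2\|}\le\frac{|v_1\wedge v_3|}{\|v_3\|}\le\frac{|v_1\wedge v_4|}{\|v_4\|}$, it applies the projection bound to isolate these ratios as coefficients, invokes a rearrangement step, and reduces everything to Claim~\ref{clm:beautiful little lemma in R^d}, a triangle-inequality statement which is established via barycentric coordinates (Proposition~\ref{prop:identity for point inside the simplex}) after projecting into a hyperplane to force a linear dependence among $v_2,\dots,v_d$. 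That chain — ordering, rearrangement, barycentric coordinates, reverse triangle inequality — is exactly the technical work you flagged as outstanding, and none of it appears in your proposal.
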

\noindent In order to prove Lemma \ref{lem:k=3 vector valued maclaurin inequalities using projection method}, we first prove the following variant of (\ref{ineq:vec maclaurin p=1 reduction}):
\begin{lemma}\label{lem:2 to d-2 case for reduced vec maclaurin}
	\begin{align}\label{ineq:2 to d-2 case for reduced vec maclaurin}
		\frac{\sum\limits_{1< i_1 < \cdots < i_{d-2}\le d}|v_1\wedge v_{i_1}\wedge\cdots\wedge v_{i_{d-2}}| }{\sum\limits_{1< i_1 < \cdots < i_{d-2}\le d}|v_{i_1}\wedge\cdots\wedge v_{i_{d-2}}| } \le  \frac{\sum\limits_{1< i \le d}| v_1\wedge v_i|}{\sum\limits_{1< i\le d}\| v_i\|}.
	\end{align}
\end{lemma}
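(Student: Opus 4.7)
The plan is to reformulate the ratios in terms of distances, using
\[
\frac{|v_1\wedge v_{i_1}\wedge\cdots\wedge v_{i_r}|}{|v_{i_1}\wedge\cdots\wedge v_{i_r}|}=\mathrm{dist}\bigl(v_1,\mathrm{span}(v_{i_1},\ldots,v_{i_r})\bigr),
\]
so that both sides become weighted averages of projection norms of $v_1$. Writing $V_j := v_2\wedge\cdots\wedge\widehat{v_j}\wedge\cdots\wedge v_d$ and clearing denominators, the lemma is equivalent to the double-sum inequality
\[
\sum_{i,j\in\{2,\ldots,d\}}\bigl(|V_j|\,|v_1\wedge v_i|-|v_1\wedge V_j|\,\|v_i\|\bigr)\ge 0.\qquad(\dagger)
\]

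I would first handle the off-diagonal part of $(\dagger)$: when $i\neq j$ the vector $v_i$ lies in $\mathrm{span}(V_j)$, so $\mathrm{span}(V_j)^\perp\subseteq v_i^\perp$ and the monotonicity of projections yields $\mathrm{dist}(v_1,\mathrm{span}(V_j))\le\|\pi_{v_i^\perp}v_1\|$; multiplying by $|V_j|\,\|v_i\|$ gives $|v_1\wedge V_j|\,\|v_i\|\le|V_j|\,|v_1\wedge v_i|$. All off-diagonal terms of $(\dagger)$ are therefore individually nonnegative, and the lemma reduces to the following diagonal inequality:
\[
\sum_{j=2}^{d}|v_1\wedge V_j|\,\|v_j\|\le\sum_{j=2}^{d}|V_j|\,|v_1\wedge v_j|.\qquad(\star)
\]

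The main obstacle is proving $(\star)$: its individual terms can go the wrong way (for instance, taking $v_1=v_j$ makes the $j$-th left-hand term equal $|V|\,\|v_j\|>0$ while the $j$-th right-hand term vanishes), so real cancellation across $j$ must be exploited. My strategy is to decompose $v_1=v_1'+v_1''$ with $v_1'\in W:=\mathrm{span}(v_2,\ldots,v_d)$ and $v_1''\in W^\perp$, and to use orthogonality in the exterior algebra to obtain the Pythagorean identities
\[
|v_1\wedge V_j|^2=c_j^2|V|^2+\|v_1''\|^2|V_j|^2,\qquad|v_1\wedge v_j|^2=|v_1'\wedge v_j|^2+\|v_1''\|^2\|v_j\|^2,
\]
where $v_1'=\sum_k c_k v_k$ and $V:=v_2\wedge\cdots\wedge v_d$. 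Both sides of $(\star)$ are then sums of two-dimensional Euclidean norms with matching leading asymptotics as $\|v_1''\|\to\infty$, which suggests reducing the problem to the planar case $v_1\in W$; there $(\star)$ reads $|V|\sum_j|c_j|\,\|v_j\|\le\sum_j|V_j|\,|v_1'\wedge v_j|$, and in the orthonormal sub-case $v_i=e_i$ this collapses to the Cauchy--Schwarz-type estimate $\sum_j|c_j|\le\sum_i\sqrt{\sum_{k\neq i}c_k^2}$, which follows by applying Cauchy--Schwarz to each summand on the right and summing. The most delicate step of the plan is extending this planar estimate from the orthonormal sub-case to an arbitrary basis of $W$, and then propagating the planar bound back to general $v_1$ via the decomposition above.
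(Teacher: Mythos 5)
Your reduction to the double-sum inequality $(\dagger)$ and your treatment of the off-diagonal terms are exactly right and coincide with the paper's first step (the pointwise bound (\ref{ineq:trivial bound for 2 to d-2 in reduced vec maclaurin})). The gap is in the diagonal inequality $(\star)$, which carries the entire content of the lemma, and your plan for it does not go through as described. First, the reduction from general $v_1$ to the planar case $v_1\in W$ is not justified by the ``matching leading asymptotics'': writing $t=\|v_1''\|$, both sides of $(\star)$ become sums of the form $\sum_j\sqrt{\alpha_j^2+t^2\beta_j^2}$ with the \emph{same} coefficients $\beta_j=|V_j|\,\|v_j\|$ on both sides, but since $\sqrt{\alpha^2+s}-\sqrt{\tilde\alpha^2+s}$ shrinks in absolute value as $s$ grows, turning on $t$ damps the favourable terms (those with $\alpha_j<\tilde\alpha_j$) just as it damps the unfavourable ones; the difference of the two sides is nonnegative at $t=0$ (the planar case) and tends to $0$ as $t\to\infty$, which leaves entirely open whether it dips negative in between. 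So no termwise or monotonicity argument in $t$ delivers the reduction. Second, the passage from the orthonormal sub-case to an arbitrary basis of $W$ is where all the difficulty lives: the quantities $\|v_j\|$, $|v_1\wedge v_j|$ and $|V_j|$ transform incompatibly under a linear change of basis, so there is no invariance to exploit, and the Cauchy--Schwarz computation for $v_i=e_i$ settles only a trivial special case. In effect you have deferred precisely the two steps that constitute the proof.

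For comparison, the paper proves $(\star)$ by ordering the indices so that $|v_1\wedge v_2|/\|v_2\|\le\cdots\le|v_1\wedge v_d|/\|v_d\|$, applying the off-diagonal bound once more to each left-hand term, and then combining the rearrangement inequality with the separate claim $\sum_{j\ge 2}|u_1\wedge\cdots\wedge\widehat{u_j}\wedge\cdots\wedge u_d|\,\|u_j\|\ge|u_2\wedge\cdots\wedge u_d|\,\|u_1\|$ (Claim \ref{clm:beautiful little lemma in R^d}). That claim \emph{is} reduced to the degenerate, linearly dependent case by projecting $u_1$ into the span of the others, but this works only because its right-hand side is a single term that is invariant under the projection while every left-hand term decreases --- exactly the one-sidedness your sum-versus-sum inequality $(\star)$ lacks; the degenerate case is then handled via barycentric coordinates and the triangle inequality. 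If you wish to keep the Pythagorean decomposition, you would need either a genuinely different argument for the planar-to-general step or to restructure $(\star)$ so that one side collapses to a single term, as in the paper's claim.
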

\noindent Of course when $d=4$, (\ref{ineq:2 to d-2 case for reduced vec maclaurin}) is exactly (\ref{ineq:vec maclaurin p=1 reduction}) with $k=3$. 

To prove Lemma \ref{lem:2 to d-2 case for reduced vec maclaurin} we need use an elementary fact regarding barycentric coordinates with respect a simplex. The next result is originally to M\"obius \cite{mobius1827barycentrische}:
\begin{proposition}[Barycentric coordinates with respect to a simplex]\label{prop:identity for point inside the simplex}
		Let $v_1,\dots,v_d \in \R^{d-1}$ be the vertices of a $(d-1)$-simplex $\Delta$. Given a vector $u \in \Delta$, there exists a unique $d$-tuple $(\beta_1,\dots,\beta_{d}) \in \R^{d}$ with $\sum_{j=1}^{d} \beta_j = 1$, satisfying the following identity
	\begin{align*}
		\sum_{j=1}^d\beta_j v_j = u.
	\end{align*}
	Furthermore, we can write such numbers $\beta_1,\dots,\beta_d$ explicitly using the following formula 
	\begin{align*}
		\beta_j = \frac{|(v_1 - u)\wedge\cdots\wedge (v_{j-1} - u)\wedge(v_{j+1} - u)\wedge\cdots\wedge (v_{d} - u)|}{2\rm{Vol}(\Delta)}.
	\end{align*}
\end{proposition}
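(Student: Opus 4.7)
The plan is to first establish existence and uniqueness of the barycentric coordinates by standard linear algebra, and then to derive the explicit formula for $\beta_j$ via Cramer's rule applied to an augmented linear system. For existence and uniqueness, I would use the fact that $\Delta$ is a nondegenerate $(d-1)$-simplex, so that $v_2-v_1,\ldots,v_d-v_1$ is a basis of $\R^{d-1}$. Given $u\in\Delta$, write $u-v_1=\sum_{j=2}^{d}\alpha_j(v_j-v_1)$ uniquely and set $\beta_j=\alpha_j$ for $j\ge 2$, $\beta_1=1-\sum_{j\ge 2}\alpha_j$. One checks directly that $\sum_j\beta_j v_j=u$ and $\sum_j\beta_j=1$; uniqueness follows because any two such representations would differ by a nontrivial relation $\sum_{j\ge 2}(\beta_j-\beta_j')(v_j-v_1)=0$, contradicting linear independence.

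For the explicit formula, the key step is to combine the two scalar conditions into a single $d\times d$ system. Since $\sum_j\beta_j=1$, the identity $\sum_j\beta_j v_j=u$ is equivalent to $\sum_j\beta_j(v_j-u)=0$. Let $W$ be the $(d-1)\times d$ matrix whose $j$th column is $v_j-u$, and let $\tilde W$ be the $d\times d$ matrix obtained by appending a row of ones to $W$. Then $\tilde W\beta=e_d$, where $e_d=(0,\ldots,0,1)^T$, and Cramer's rule gives $\beta_j=\det(\tilde W^{(j)})/\det(\tilde W)$, with $\tilde W^{(j)}$ the matrix obtained by replacing the $j$th column of $\tilde W$ by $e_d$. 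Expanding $\det(\tilde W^{(j)})$ along column $j$ produces, up to the sign $(-1)^{d+j}$, the determinant of the $(d-1)\times(d-1)$ submatrix obtained from $W$ by deleting column $j$, whose absolute value is exactly $|(v_1-u)\wedge\cdots\wedge\widehat{(v_j-u)}\wedge\cdots\wedge(v_d-u)|$.

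For the denominator I would perform column operations on $\tilde W$: subtracting the first column from each subsequent one and expanding along the bottom row yields $\det(\tilde W)=\pm\det[v_2-v_1\mid\cdots\mid v_d-v_1]$, whose absolute value is the standard simplex-volume normalization $(d-1)!\,\mathrm{Vol}(\Delta)$. Since $u$ in the interior of $\Delta$ forces all $\beta_j>0$, the absolute values in the proposed formula agree with the signed Cramer ratios, which takes care of the sign bookkeeping. The main point to verify carefully, and arguably the only subtlety, is this normalization constant: the stated denominator $2\,\mathrm{Vol}(\Delta)$ coincides with $(d-1)!\,\mathrm{Vol}(\Delta)$ exactly when $d=3$, so for general $d$ one would expect a factor of $(d-1)!$ in place of $2$.
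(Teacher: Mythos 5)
The paper does not prove this proposition; it is stated as a classical fact and attributed to M\"obius, so there is no in-paper argument to compare against. Your Cramer's-rule derivation is a correct and standard way to establish it: the reduction to the square system $\tilde W\beta = e_d$, the cofactor expansion of $\det(\tilde W^{(j)})$ along column $j$, and the column-reduction of $\det(\tilde W)$ to $\pm\det[v_2-v_1\mid\cdots\mid v_d-v_1]$ are all sound, and the positivity of the $\beta_j$ for $u\in\Delta$ does take care of the signs.

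More importantly, you correctly identified that the normalization constant in the stated formula is off for general $d$: the denominator should be $(d-1)!\,\mathrm{Vol}(\Delta)$, which equals $2\,\mathrm{Vol}(\Delta)$ only when $d=3$. This is a genuine error in the statement as printed (and indeed, when the proposition is later invoked in the proof of Claim~\ref{clm:beautiful little lemma in R^d}, the denominator is written simply as $\mathrm{Vol}(\Delta_{\tilde u_1,\dots,\tilde u_d})$, i.e.\ with yet another normalization). Fortunately, the discrepancy is harmless downstream: in the application one multiplies the identity $\sum_j \beta_j \tilde u_j = \mathbf 0$ through by the common denominator, so only the numerators $|u_1\wedge\cdots\wedge\widehat{u_j}\wedge\cdots\wedge u_d|$ matter and the constant cancels. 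Your proof is thus correct and the critique of the constant is well taken.
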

\begin{proof}[Proof of Lemma \ref{lem:2 to d-2 case for reduced vec maclaurin}]
	By rearranging it suffices to prove
	\begin{align*}
		\left(\sum\limits_{1< i_1 < \cdots < i_{d-2}\le d}|v_1\wedge v_{i_1}\wedge\cdots\wedge v_{i_{d-2}}|\right)  \left(\sum\limits_{1< i\le d}\| v_i\|\right) \le &\left(\sum\limits_{1< i \le d}| v_1\wedge v_i|\right)\\
		&\cdot\left(\sum\limits_{1< i_1 < \cdots < i_{d-2}\le d}|v_{i_1}\wedge\cdots\wedge v_{i_{d-2}}|\right).
	\end{align*}
	For $j = 1,\dots,d-2$, it is always true that
	\begin{align*}
		\frac{|v_1\wedge v_{i_1}\wedge\cdots\wedge v_{i_{d-2}}|}{|v_{i_1}\wedge\cdots\wedge v_{i_{d-2}}|} = \left\| \pi_{\{v_{i_1},\dots,v_{i_{d-2}}\}^{\perp}}v_1\right\|\le \left\|  \pi_{{v_{i_j}}^{\perp}}v_1\right\| = \frac{|v_1\wedge v_{i_j}|}{\left\|v_{i_j}\right\|}.
	\end{align*}
Rearranging this we get
	\begin{align}\label{ineq:trivial bound for 2 to d-2 in reduced vec maclaurin}
		|v_1\wedge v_{i_1}\wedge\cdots\wedge v_{i_{d-2}}|\|v_{i_j}\| \le |v_1 \wedge v_{i_j}||v_{i_1}\wedge\cdots\wedge v_{i_{d-2}}|.
	\end{align}
	Applying this to the terms on the left hand side reduces things to proving that
	\begin{align*}
		\sum\limits_{j=1}^d |v_1\wedge v_2\wedge\cdots\wedge\widehat{v_j}\wedge\cdots\wedge v_d|\|v_j\| \le \sum\limits_{j=1}^d |v_1\wedge v_j|| v_2\wedge\cdots\wedge\widehat{v_j}\wedge\cdots\wedge v_d|.
	\end{align*}
	Without loss of generality we may assume that
	\begin{align*}
		\frac{|v_1\wedge v_2|}{\|v_2\|} \le \cdots \le \frac{|v_1\wedge v_d|}{\|v_d\|},
	\end{align*}
	then bearing this in mind we can apply (\ref{ineq:trivial bound for 2 to d-2 in reduced vec maclaurin}) to each term on the left hand side to get
	\begin{align*}
		\sum\limits_{j=1}^d |v_1\wedge v_2\wedge\cdots\wedge\widehat{v_j}\wedge\cdots\wedge v_d|\|v_j\| \le 	\frac{|v_1\wedge v_2|}{\|v_2\|}  \left( \sum_{j=3}^d |v_2\wedge\cdots\wedge\widehat{v_j}\wedge\cdots\wedge v_d|\|v_j\|  \right)&\\ + \frac{|v_1\wedge v_3|}{\|v_3\|}|v_3\wedge\cdots\wedge v_d|\|v_2\|&.
	\end{align*}
	Supposing we know that
	\begin{align*}
		\sum_{j=3}^d |v_2\wedge\cdots\wedge\widehat{v_j}\wedge\cdots\wedge v_d|\|v_j\| \ge |v_3\wedge\cdots\wedge v_d|\|v_2\|,
	\end{align*}
	then by a simple application of the rearrangement inequality for numbers and our assumption on the sizes of the terms $\frac{|v_1\wedge v_j|}{\|v_j\|}$, we see that
	\begin{align*}
		&\frac{|v_1\wedge v_2|}{\|v_2\|}  \left( \sum_{j=3}^d |v_2\wedge\cdots\wedge\widehat{v_j}\wedge\cdots\wedge v_d|\|v_j\|  \right) + \frac{|v_1\wedge v_3|}{\|v_3\|}|v_3\wedge\cdots\wedge v_d|\|v_2\|\\ &\le \frac{|v_1\wedge v_2|}{\|v_2\|}|v_3\wedge\cdots\wedge v_d|\|v_2\| + \frac{|v_1\wedge v_3|}{\|v_3\|} \left( \sum_{j=3}^d |v_2\wedge\cdots\wedge\widehat{v_j}\wedge\cdots\wedge v_d|\|v_j\|  \right)\\ &\le \sum_{j=2}^d |v_1\wedge v_j||v_2\wedge\cdots\wedge\widehat{v_j}\wedge\cdots\wedge v_d|,
	\end{align*}
	which is what we want. It suffices to prove the following claim:
	\begin{claim}\label{clm:beautiful little lemma in R^d}
		For any $u_1,\dots,u_d \in \R^d$ we have
		\begin{align*}
			\sum_{j=2}^d |u_1\wedge\cdots\wedge\widehat{u_j}\wedge\cdots\wedge u_d|\|u_j\| \ge |u_2\wedge\cdots\wedge u_d| \|u_1\|.
		\end{align*}
	\end{claim}
	\begin{proof}[Proof of Claim \ref{clm:beautiful little lemma in R^d}]
		Supposing that all the vectors lie in a $(d-1)$-dimensional subspace, then it is clear that they must be linearly dependent. In particular, one can find coefficients $\alpha_1,\dots,\alpha_d \in \R$, which are not all equal to zero, such that
		\begin{align}\label{eq:linearly dependent identity}
			\sum_{j=1}^d\alpha_j u_j = \mathbf{0}.
		\end{align}
		By assumption, we have $\sum_{j=1}^d|\alpha_j| \neq 0$, so by setting 
		\begin{align*}
			\beta_j = \frac{|\alpha_j|}{\sum_{j=1}^d|\alpha_j|},
		\end{align*}
		and
		\begin{align*}
			\tilde{u}_j =\textrm{sgn}(\alpha_j) u_j,
		\end{align*}
		we can rewrite (\ref{eq:linearly dependent identity}) as
		\begin{align}\label{eq:linearly dependent identity 2}
			\sum_{j=1}^d\beta_j \tilde{u}_j = \mathbf{0}.
		\end{align}
		By definition, we have
		\begin{align*}
			\sum_{j=1}^d\beta_j = 1,
		\end{align*}
		so applying Proposition \ref{prop:identity for point inside the simplex} we can write
		\begin{align*}
			\beta_j = \frac{|\tilde{u}_1\wedge\cdots\wedge \tilde{u}_{j-1} \wedge \tilde{u}_{j+1}\wedge \cdots\wedge \tilde{u}_d|}{\textrm{Vol}\left(\Delta_{\tilde{u}_1,\dots,\tilde{u}_d}\right)} = \frac{|u_1\wedge\cdots\wedge u_{j-1} \wedge u_{j+1}\wedge \cdots\wedge u_d|}{\textrm{Vol}\left(\Delta_{\tilde{u}_1,\dots,\tilde{u}_d}\right)} ,
		\end{align*}
		where $\Delta_{\tilde{u}_1,\dots,\tilde{u}_d}$ denotes the simplex with vertices $\tilde{u}_1,\dots,\tilde{u}_d$. So, (\ref{eq:linearly dependent identity 2}) becomes
		\begin{align*}
			\sum_{j=1}^d\frac{|u_1\wedge\cdots\wedge u_{j-1} \wedge u_{j+1}\wedge \cdots\wedge u_d|}{\textrm{Vol}\left(\Delta_{\tilde{u}_1,\dots,\tilde{u}_d}\right)} \tilde{u}_j = \mathbf{0},
		\end{align*}
		and then multiplying through by $\rm{Vol}\left(\Delta_{\tilde{u}_1,\dots,\tilde{u}_d}\right)$ we get
		\begin{align*}
			\sum_{j=1}^d|u_1\wedge\cdots\wedge u_{j-1} \wedge u_{j+1}\wedge \cdots\wedge u_d| \tilde{u}_j = \mathbf{0}.
		\end{align*}
		By the reverse triangle inequality, we have
		\begin{align*}
			0 &= \left\|  \sum_{j=1}^d |u_1\wedge\cdots\wedge u_{j-1} \wedge u_{j+1}\wedge \cdots\wedge u_d|\tilde{u}_j \right\| \\
			&\ge |u_2\wedge\cdots\wedge u_d| \|\tilde{u}_1\| - \left\|  \sum_{j=2}^d |u_1\wedge\cdots\wedge u_{j-1} \wedge u_{j+1}\wedge \cdots\wedge u_d|\tilde{u}_j \right\|.
		\end{align*}
		Now by simply rearranging and applying the standard triangle inequality, we see that
		\begin{align*}
			|u_2\wedge\cdots\wedge u_d| \|u_1\|  = |u_2\wedge\cdots\wedge u_d| \|\tilde{u}_1\| &\le \left\|  \sum_{j=2}^d |u_1\wedge\cdots\wedge u_{j-1} \wedge u_{j+1}\wedge \cdots\wedge u_d|\tilde{u}_j \right\|\\
			 &\le 	\sum_{j=2}^d |u_1\wedge\cdots\wedge u_{j-1} \wedge u_{j+1}\wedge \cdots\wedge u_d|\|\tilde{u}_j\|\\ 
			 &= \sum_{i=2}^d |u_1\wedge\cdots\wedge u_{j-1} \wedge u_{j+1}\wedge \cdots\wedge u_d|\|u_j\|.
		\end{align*}
		So it suffices to reduce to the case where all the vectors lie in a $(d-1)$-dimensional subspace. Suppose that the vectors $u_1,\dots,u_d$ do not lie in a $(d-1)$-dimensional subspace. Let us define a new family $\omega_1,\dots,\omega_d$ of vectors from the original family by simply projecting $u_1$ onto the subspace spanned by $u_2,\dots,u_d$ and scaling appropriately. More precisely define
		\begin{align*}
			\omega_1 := \frac{\|u_1\|}{\|\pi u_1\|}u_1,
		\end{align*}
		where $\pi = \pi_{{\rm span}\{u_2,\dots,u_d\}}$ and for $j=2,\dots,d$
		\begin{align*}
			\omega_j := u_j.
		\end{align*}
		Clearly
		\begin{align*}
			|\omega_2\wedge\cdots\wedge \omega_d| \|\omega_1\|=|u_2\wedge\cdots\wedge u_d| \|u_1\|,
		\end{align*}
		so it suffices to prove that for $j=2,\dots,d$
		\begin{align*}
			|\omega_1\wedge\cdots\wedge\widehat{\omega_j}\wedge\cdots\wedge \omega_d|\|\omega_j\|\le |u_1\wedge\cdots\wedge\widehat{u_j}\wedge\cdots\wedge u_d|\|u_j\|,
		\end{align*}
		which follows from the fact that
		\begin{align*}
			\frac{|\pi u_1\wedge u_2\wedge\cdots\wedge\widehat{u_j}\wedge\cdots\wedge u_d|}{\|\pi u_1\|} \le \frac{|u_1\wedge\cdots\wedge\widehat{u_j}\wedge\cdots\wedge u_d|}{\|u_1\|}
		\end{align*}
	\end{proof}
	\noindent This concludes the proof of Lemma \ref{lem:2 to d-2 case for reduced vec maclaurin}.
\end{proof}
\noindent By a simple argument we can now establish $(\ref{ineq:vec maclaurin p=1 reduction})$ for $k=3$ in all dimensions:
\begin{proof}[Proof of Lemma \ref{lem:k=3 vector valued maclaurin inequalities using projection method}]
	Directly applying Lemma \ref{lem:2 to d-2 case for reduced vec maclaurin} with  $d=4$, for any $v_1,\dots,v_4 \in \R^4$ we get
	\begin{align*}
		|v_1\wedge v_2\wedge v_3|\|v_4\| + |v_1\wedge v_2\wedge v_4|\|v_3\| + |v_1\wedge v_3\wedge v_4|\|v_2\| \le |v_2 \wedge v_3||v_1\wedge v_4| + &|v_2 \wedge v_4||v_1\wedge v_3|\\
		&+ |v_3 \wedge v_4||v_1\wedge v_2|.
	\end{align*}
	For higher dimensions simply note that by the $d=4$ case we have
	\begin{align*}
		\sum |v_1\wedge v_a\wedge v_b|\|v_c\| &= \sum_{ \{i,j,k\} \subset [d]} |v_1\wedge v_j\wedge v_k|\|v_i\| + |v_1\wedge v_i\wedge v_k|\|v_j\| + |v_1\wedge v_i\wedge v_j|\|v_k\|\\ &\le \sum_{ \{i,j,k\} \subset [d]} |v_j\wedge v_k||v_1\wedge v_i| + | v_i\wedge v_k||v_1\wedge v_j| + |v_i\wedge v_j||v_1\wedge v_k|\\ &= \sum |v_a\wedge v_b||v_1\wedge v_c|.
	\end{align*}
\end{proof}

\section{Connection to intrinsic volumes}\label{section:connections to intrinsic volumes}
As mentioned in the introduction, the vector-valued Maclaurin inequality with $p=1$ can be rewritten as a sequence of inequalities between intrinsic volumes of certain polytopes. Firstly, let us state a useful formula for calculating the mixed volume of a zonoid:
\begin{theorem}[Theorem 5.3.2 in \cite{Schneider-book}]\label{thm:schneider 5.3.2}
	For $i = 1,\dots, j\le d$ let $Z_j$ be a generalised zonoid with generating measure $\rho_i$ and let $K_1,\dots,K_{d-j} \subset \R^d$ be convex bodies. Then
	\begin{align*}
		&V(Z_1,\dots,Z_j,K_1,\dots,K_{d-j})=\\
		&\frac{2^j(d-j)!}{d!}\int_{\Sp^{d-1}} \cdots \int_{\Sp^{d-1}} \left\vert u_1 \wedge\cdots\wedge u_j\right\vert v^{(d-j)}\left(\pi_{\{u_1,\dots,u_j\}^{\perp}}K_1,\dots,\pi_{\{u_1,\dots,u_j\}^{\perp}}K_{d-j} \right) d\rho_1(u_1) \cdots d\rho_j(u_j),
	\end{align*}
	where $v^{(d-j)}$ denotes the $j$-dimensional mixed volume.
\end{theorem}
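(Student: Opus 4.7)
The plan is to reduce the identity to the elementary case of a Minkowski sum of symmetric line segments and then extend to generalised zonoids by the multilinearity and continuity of mixed volumes. Recall that a generalised zonoid $Z$ with generating (signed) measure $\rho$ on $\Sp^{d-1}$ has support function $h_Z(x) = \int_{\Sp^{d-1}}|\langle x, u\rangle|\,d\rho(u)$, so that an atom $\lambda\delta_{u_0}$ of $\rho$ corresponds to a segment $\lambda[-u_0, u_0]$ as a Minkowski summand. Since mixed volumes are positive linear in each argument and continuous in the Hausdorff metric, both sides of the stated identity are multilinear and weakly continuous in the measures $\rho_1, \ldots, \rho_j$ (after Hahn--Jordan decomposition of the signed parts). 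It therefore suffices to verify the formula when each $Z_i$ is a single symmetric segment $[-u_i, u_i]$ with $u_i \in \Sp^{d-1}$; the general case then follows by integrating against the product measure $d\rho_1(u_1) \cdots d\rho_j(u_j)$ and weak approximation by finite atomic measures.

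For the segment case I would induct on $j$ starting from the classical Cauchy-type identity
\[
V\bigl(\tfrac{1}{2}[-u, u], K_1, \ldots, K_{d-1}\bigr) \;=\; \frac{1}{d}\, v^{(d-1)}\bigl(\pi_{u^\perp} K_1, \ldots, \pi_{u^\perp} K_{d-1}\bigr), \qquad u \in \Sp^{d-1},
\]
which comes from the expansion $|K + t\cdot \tfrac{1}{2}[-u,u]| = |K| + t|\pi_{u^\perp} K|$ (a prism over the projection, since higher-order contributions from a one-dimensional summand vanish) combined with polarisation of the mixed-volume polynomial. Applying this identity $j$ times in succession — at each step absorbing one segment $\tfrac{1}{2}[-u_k, u_k]$ and passing to the $(d-k)$-dimensional ambient subspace $\{u_1, \ldots, u_k\}^\perp$ — produces the telescoping constant $1/[d(d-1)\cdots (d-j+1)] = (d-j)!/d!$, while the successive projected segment lengths multiply to $\prod_{k=1}^{j} \bigl|\pi_{\{u_1,\ldots,u_{k-1}\}^\perp} u_k\bigr| = |u_1 \wedge \cdots \wedge u_j|$ by the Gram--Schmidt interpretation of the wedge norm. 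This yields
\[
V\bigl(\tfrac{1}{2}[-u_1, u_1], \ldots, \tfrac{1}{2}[-u_j, u_j], K_1, \ldots, K_{d-j}\bigr) \;=\; \frac{(d-j)!}{d!}\, |u_1 \wedge \cdots \wedge u_j|\, v^{(d-j)}\bigl(\pi K_1, \ldots, \pi K_{d-j}\bigr),
\]
with $\pi = \pi_{\{u_1,\ldots,u_j\}^\perp}$.

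Finally, by 1-homogeneity of mixed volumes in each slot, the full segment $[-u_i, u_i] = 2 \cdot \tfrac{1}{2}[-u_i, u_i]$ contributes a factor of $2$ per zonoid argument, accounting for the $2^j$ in the stated formula; integrating the segment identity against $d\rho_1(u_1) \cdots d\rho_j(u_j)$ via multilinearity then yields the theorem. The main obstacle is the careful bookkeeping of the constant $2^j(d-j)!/d!$ — in particular, tracking the factor of $2$ that depends on whether one parametrises half-segments $\tfrac{1}{2}[-u,u]$ or full segments $[-u,u]$ — together with the justification of the reduction from arbitrary signed generating measures to atomic ones by weak-$*$ approximation, which rests on the joint continuity of mixed volumes and on the boundedness and continuity of the integrand $|u_1\wedge\cdots\wedge u_j|\, v^{(d-j)}(\pi K_1, \ldots, \pi K_{d-j})$ as a function of the directions $u_1, \ldots, u_j$.
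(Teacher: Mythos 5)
This result is cited verbatim from Schneider's book (Theorem 5.3.2) and is not proved in the paper, so there is no in-paper argument to compare against. Your sketch --- reducing by multilinearity and weak-$*$ approximation to the case of symmetric segments, applying the one-segment Cauchy-type identity $V\bigl(\tfrac{1}{2}[-u,u],K_1,\dots,K_{d-1}\bigr)=\tfrac{1}{d}\,v^{(d-1)}\bigl(\pi_{u^\perp}K_1,\dots,\pi_{u^\perp}K_{d-1}\bigr)$ inductively inside nested orthogonal complements, and recognising the telescoping product of projected segment lengths as $|u_1\wedge\cdots\wedge u_j|$ --- is essentially Schneider's own proof, and your accounting of the constant $2^j(d-j)!/d!$ (including the factor $2$ per slot from passing between half- and full-segments) is correct; the only points being glossed are the Hahn--Jordan splitting needed to extend from zonotopes to generalised zonoids and the routine continuity argument for the weak-$*$ limit, both of which are standard.
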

\noindent Note that any zonotope $Z = \sum\limits_{i=1}^m \alpha_i [-v_i,v_i]$ has a support function defined by
\begin{align*}
	h_Z(u) = \sum_{i=1}^m \alpha_i\left\vert\left\langle u,v_i\right\rangle\right\vert = \int_{\Sp^{d-1}}\left\vert\left\langle u,v\right\rangle\right\vert \,d\rho(v),
\end{align*}
where $\rho$ is concentrated at $\pm v_i$ and assigns mass $\frac{\alpha_i}{2}$ to each of these points. So by Theorem \ref{thm:schneider 5.3.2}, given zonoids $Z_1 = \sum\limits_{k_1} \alpha_{k_1} [-v_{k_1},v_{k_1}],\dots,Z_j = \sum\limits_{k_j} \alpha_{k_j} [-v_{k_j},v_{k_j}]$ and a fixed convex body $K \subset \R^d$, we have
\begin{align*}
	V(Z_1,\dots, Z_j, K[d-j]) =\frac{2^{j}(d-j) !}{d !} \sum_{k_{1}, \dots, k_{j}} \alpha_{k_{1}} \cdots \alpha_{k_{j}} \left|v_{k_{1}}\wedge \dots \wedge v_{k_{j}}\right| \left\vert\pi_{\{v_{k_{1}}, \dots, v_{k_{j}}\}}^{\perp}K\right\vert
\end{align*}
In particular, if $P$ is a centred parallelotope with edges of lengths $\|v_1\|,\dots,\|v_d\|$ in the directions of  $\frac{v_1}{\|v_1\|},\dots,\frac{v_d}{\|v_d\|}$, then we have
\begin{align*}
	h_P(u) = \sum_{i=1}^d \frac{1}{2}\left\vert\left\langle u,v_i\right\rangle\right\vert.
\end{align*}
\noindent It follows that
\begin{align*}
	V_k(P) = \binom{d}{k}\frac{V(P,k; B_2^d, d-k)}{{\omega_{d-k} }} &= \frac{\frac{2^k(d-k)!}{d!} \sum\limits_{\{i_1,\dots,i_k\} \subset [d] } \left(\frac{1}{2}\right)^{k}|v_{i_1}\wedge\cdots\wedge v_{i_k}| \left\vert\pi_{\{v_{i_1},\dots,v_{i_k} \}^{\perp} } B_2^d\right\vert}{\omega_{d-k}}\\ 
	&= \binom{d}{k} \frac{k!(d-k)!}{d!}\sum\limits_{1\le i_1<\cdots<i_k\le d} |v_{i_1}\wedge\cdots\wedge v_{i_k}|\\ &=\sum\limits_{1\le i_1<\cdots<i_k\le d} |v_{i_1}\wedge\cdots\wedge v_{i_k}|.
\end{align*}

In this language, Conjecture \ref{conj:vec maclaurin} states that for all parallelotopes $P \subset \R^d$ and $1< j\le d$, the following inequality holds
\begin{align}\label{ineq:vec valued Maclaurin in terms of intrinsic volumes}
	\left(\frac{V_j(P)}{\binom{d}{j}}\right)^{\frac{1}{j}} \le 	\left(\frac{V_{j-1}(P)}{\binom{d}{j-1}}\right)^{\frac{1}{j-1}},
\end{align}
with equality if and only if $P$ is a cube. Suppose instead we consider the zonotope $Z = \sum\limits_{i=1}^m \frac{1}{2} [-v_i,v_i] \subset \R^d$, then by the same argument we have
\begin{align*}
	V_k(Z) = \binom{d}{k}\frac{V(Z,k; B_2^d, d-k)}{{\omega_{d-k} }} &= \frac{\frac{2^k(d-k)!}{d!} \sum\limits_{\{i_1,\dots,i_k\} \subset [m] } \left(\frac{1}{2}\right)^{k}|v_{i_1}\wedge\cdots\wedge v_{i_k}| \left\vert\pi_{\{v_{i_1},\dots,v_{i_k} \}^{\perp} } B_2^d\right\vert}{\omega_{d-k}}\\ 
	&= \binom{d}{k} \frac{k!(d-k)!}{d!}\sum\limits_{1\le i_1<\cdots<i_k\le m} |v_{i_1}\wedge\cdots\wedge v_{i_k}|\\ &=\sum\limits_{1\le i_1<\cdots<i_k\le m} |v_{i_1}\wedge\cdots\wedge v_{i_k}|.
\end{align*}
So for $k \le d$ we can also write Conjecture \ref{conj:vec maclaurin} as
\begin{align}\label{ineq:vec valued Maclaurin in terms of intrinsic volumes with m}
	\left(\frac{V_j(Z)}{\binom{m}{j}}\right)^{\frac{1}{j}} \le 	\left(\frac{V_{j-1}(Z)}{\binom{m}{j-1}}\right)^{\frac{1}{j-1}}.
\end{align}
Rearranging this gives
\begin{align}\label{ineq:vec valued Maclaurin in terms of intrinsic volumes with m ratio form}
	\frac{V_{j-1}(Z)^j}{V_{j}(Z)^{j-1}} \le \frac{\binom{m}{j-1}^j}{\binom{m}{j}^{j-1}}.
\end{align}
This can be compared now to the Aleksandrov inequalitites for
quermassintegrals:

\begin{equation*}\left (\frac{W_i(K)}{|B_2^n|}\right
	)^{\frac{1}{n-i}} \geq\left (\frac{W_j(K)}{|B_2^n|}\right
	)^{\frac{1}{n-j}}\, \qquad n>i>j\geq 0.
\end{equation*}
Taking into account that $V_j(K) = \binom{n}{j} \frac{W_{n-j}(K)}{\kappa_{n-j}}$ we can rewrite the last one as
\begin{align}\label{ineq:intrinsic volume j to j-1 ratio bound from classical Brunn-Minkowski theory}
	\frac{V_{j-1}(K)^j}{V_{j}(K)^{j-1}} \le \frac{V_{j-1}(B_2^d)^j}{V_{j}(B_2^d)^{j-1}}.
\end{align}
Using the Aleksandrov inequality, McMullen proved the following dimension-free bound for the intrinsic volumes. Namely,
$$V_j(K)^2\geq\frac{j+1}{j}V_{j+1}(K)V_{j-1}(K).$$
Suppose now that Conjecture \ref{conj:vec maclaurin} is true. Then, we will show that we can get a stronger McMullen-type inequality for zonotopes. We can use (\ref{ineq:vec valued Maclaurin in terms of intrinsic volumes with m ratio form}) to recover the above. Indeed, (\ref{ineq:vec valued Maclaurin in terms of intrinsic volumes with m ratio form}) implies log-concavity,
$$V_j(Z)^2\geq\frac{\binom{m}{j}^2}{\binom{m}{j-1}\binom{m}{j+1}}V_{j+1}(Z)V_{j-1}(Z).$$
This can be simplified to
$$V_j(Z)^2\geq\frac{(j+1)(m-j+1)}{j(m-j)}V_{j+1}(Z)V_{j-1}(Z).$$
The factor that appears in the right-hand side is always at least $\frac{j+1}{j}$, which implies McMullen's inequality. Moreover, we attain this bound as $m \to \infty$, as expected.

Note also that in \cite{schneider_hug} the following inequalities were proved: for zonoids $Z$
\begin{align}\label{ineq:hug and schneider reverse ineqaulities}
	\sup\limits_{\Lambda \in GL(d)}\frac{V_j(\Lambda Z)}{V_1(\Lambda Z)^j} \ge \frac{1}{d^j}\binom{d}{j}
\end{align}
if $\dim(Z) = d$ and $j\ge 2$, with equality if and only if $Z$ is a parallelotope. These are reverse, in a sense, to a consequence of (\ref{ineq:intrinsic volume j to j-1 ratio bound from classical Brunn-Minkowski theory}), namely for any convex body $K$
\begin{align}\label{ineq:intrinsic volume 1 to j ratio bound from classical Brunn-Minkowski theory}
	\frac{V_j(K)}{V_1(K)^j} \le \frac{V_j(B_2^d)}{V_1(B_2^d)^j}.
\end{align}
The equality case in (\ref{ineq:hug and schneider reverse ineqaulities}) exactly says that
\begin{align}\label{ineq:hug and schneider reverese equality case}
	\left(\frac{V_j(P)}{\binom{d}{j}}\right)^{\frac{1}{j}} \le 	\frac{V_{1}(P)}{\binom{d}{1}}.
\end{align}
This is of course a special case of (\ref{ineq:vec valued Maclaurin in terms of intrinsic volumes}). If this more general equality case can be established, one might hope to prove the corresponding generalisation to (\ref{ineq:hug and schneider reverse ineqaulities}).

\subsection{A non-sharp vector-valued Maclaurin inequality with $p=1$}\label{section:nonsharp p=1 case}
Let us see how we can use this language borrowed from convex geometry to reinterpret the reduction described at the beginning of Section \ref{section:partial results for p=1 and monotonicity argument}. In particular, let us try to rewrite (\ref{ineq:vec maclaurin p=1 reduction}) purely in terms of intrinsic volumes. Firstly, fix $v_1,\dots,v_{m-1} \in \R^{m-1} \subset \R^{m}$ and $u \in \R^{m-1} \times \R \cong \R^{m}$ , then setting $Z = \sum_{i=1}^{m-1} \frac{1}{2}[-v_i,v_i] \subset \R^{m-1}$, we can apply Theorem \ref{thm:schneider 5.3.2} to get
\begin{align*}
	V_{k}(\pi_{u^{\perp}}Z) &= \binom{m-1}{k}\frac{V\left(\pi_{u^{\perp}}Z,k ; B_2^{m-1},m-k-1\right)}{\omega_{m-k-1}}\\&=\binom{m}{m-1}\binom{m-1}{k}\frac{V\left(Z,k ; B_2^{m},m-k-1; \frac{1}{2}[-u,u]\right)}{\omega_{m-k-1}}\\&=\frac{(m)!}{(m-k-1)!k!}\frac{\left(\frac{2^{k+1}(m-k-1)!}{m!}\right)\sum\limits_{\{i_1,\dots,i_k\} \subset [m-1] } \left(\frac{1}{2}\right)^{k+1}|v_{i_1}\wedge\cdots\wedge v_{i_{k}}\wedge u|\left\vert\pi_{\{v_{i_1},\dots,v_{i_k},u\}^{\perp}}B_2^{m}\right\vert }{\omega_{m-k-1}}\\&=\sum\limits_{1\le i_1<\dots<i_k\le m -1}|v_{i_1}\wedge\cdots\wedge v_{i_{k}}\wedge u|.
\end{align*}
For any $u_1,\dots,u_m \in\R^m$, if we set $Z = \sum_{i=2}^m\frac{1}{2}[-u_i,u_i]$, then using the calculation we have just made along with Theorem \ref{thm:schneider 5.3.2}, inequality (\ref{ineq:vec maclaurin p=1 reduction}) can be rewritten as
\begin{align}\label{ineq:vec maclaurin p=1 reduction in terms of projections}
	\frac{V_{k-1}\left(\pi_{u_1^{\perp}}Z \right)}{V_{k-1}\left(Z \right)} \le \frac{V_{k-2}\left(\pi_{u_1^{\perp}}Z \right)}{V_{k-2}\left(Z \right)}
\end{align}
for $1< k \le m-1$. Theorem 1.2 in \cite{local_A-F_for_zonoids} implies the following result: let $K\subset \R^d$ be a convex body, then for any $u\in \R^d$ we have
\begin{align}\label{ineq:LHS of Theorem 1.2 in local A-F paper}
	\frac{V_{k-1}\left(\pi_{u^{\perp}}K \right)}{V_{k-1}\left(K \right)} \le \frac{2(d-k+1)}{d-k+2} \frac{V_{k-2}\left(\pi_{u^{\perp}}K \right)}{V_{k-2}\left(K \right)},
\end{align}
for all $3\le k\le d$. In the special case where $m= d+1$ and $u_1$ lies in the span of $u_2,\dots,u_m$, one can view  (\ref{ineq:vec maclaurin p=1 reduction in terms of projections}) as the sharp version of (\ref{ineq:LHS of Theorem 1.2 in local A-F paper}) when we restrict ourselves to zonotopes. Using an analogous derivation to the one given at the beginning of Section \ref{section:partial results for p=1 and monotonicity argument}, we can deduce the following result as a consequence of (\ref{ineq:LHS of Theorem 1.2 in local A-F paper}):
\begin{theorem}\label{thm:vec maclaurin with non-sharp constant}
	For any $d$-tuple of vectors $v_1,\dots,v_d \in \R^d$ and any $2 < k \le d$, the following inequality holds:
	\begin{align*}
		\left(\frac{\sum\limits_{1\le i_1 < \cdots < i_{k}\le d}|v_{i_1}\wedge\cdots\wedge v_{i_{k}}|}{\binom{d}{k}}\right)^{\frac{1}{k}} \le \frac{2(d-k+1)}{(d-k+2)} \left(\frac{\sum\limits_{1\le i_1 < \cdots < i_{k-1}\le d}|v_{i_1}\wedge \cdots\wedge v_{i_{k-1}}|}{\binom{d}{k-1}}\right)^{\frac{1}{k-1}}
	\end{align*}
\end{theorem}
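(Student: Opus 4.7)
The plan is to emulate the monotonicity argument at the start of Section \ref{section:partial results for p=1 and monotonicity argument}, but with (\ref{ineq:LHS of Theorem 1.2 in local A-F paper})---the local Aleksandrov--Fenchel inequality---taking the place of the conjecturally sharp reduction (\ref{ineq:vec maclaurin p=1 reduction}).

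First I would set up the dictionary from Section \ref{section:connections to intrinsic volumes}. For $Z = \sum_{i=2}^{d}\tfrac{1}{2}[-v_i,v_i]$ and $u=v_1$, the formula $V_j(\pi_{u^{\perp}}Z)=\sum_{|I|=j}|v_{i_1}\wedge\cdots\wedge v_{i_j}\wedge u|$ derived in the previous section identifies the two ratios appearing in (\ref{ineq:vec maclaurin p=1 reduction}) with $V_{k-1}(\pi_{v_1^{\perp}}Z)/V_{k-1}(Z)$ and $V_{k-2}(\pi_{v_1^{\perp}}Z)/V_{k-2}(Z)$, exactly as in (\ref{ineq:vec maclaurin p=1 reduction in terms of projections}). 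Applying (\ref{ineq:LHS of Theorem 1.2 in local A-F paper}) with $K=Z$ immediately gives the non-sharp reduction
$$\frac{\sum_{|I|=k-1,\,I\subset[d]\setminus\{1\}}|v_1\wedge v_{i_1}\wedge\cdots\wedge v_{i_{k-1}}|}{\sum_{|I|=k-1,\,I\subset[d]\setminus\{1\}}|v_{i_1}\wedge\cdots\wedge v_{i_{k-1}}|} \le C\cdot\frac{\sum_{|I|=k-2,\,I\subset[d]\setminus\{1\}}|v_1\wedge v_{i_1}\wedge\cdots\wedge v_{i_{k-2}}|}{\sum_{|I|=k-2,\,I\subset[d]\setminus\{1\}}|v_{i_1}\wedge\cdots\wedge v_{i_{k-2}}|},$$
with $C = \tfrac{2(d-k+1)}{d-k+2}$; this is precisely a $C$-relaxed form of (\ref{ineq:vec maclaurin p=1 reduction}), meaning the lower bound for $\|\tilde v_1\|$ required by the Section \ref{section:partial results for p=1 and monotonicity argument} construction is at most $C$ times the upper bound.

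With this relaxed reduction in hand, the rest of the argument mirrors the monotonicity procedure of Section \ref{section:partial results for p=1 and monotonicity argument}: pick $\|\tilde v_1\|$ inside the admissible interval and define $\tilde v_1\in\{v_2,\ldots,v_d\}^{\perp}$ of that norm; iterating for each coordinate yields an orthogonal family $\tilde v_1,\ldots,\tilde v_d$ such that the pair of replacements either preserves $S_k$ at the cost of inflating $S_{k-1}$ by a controlled factor, or conversely. Feeding the norms $\|\tilde v_1\|,\ldots,\|\tilde v_d\|$ into the classical Maclaurin inequality (Theorem \ref{thm:maclaurin inequalities}) then closes the chain and yields the desired non-sharp vector-valued Maclaurin with the single factor $C$ on the right-hand side.

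The main obstacle is the careful bookkeeping of the constant $C$ across the iterated orthogonalization: naively implemented, each of the $d$ single-vector replacements could potentially contribute its own factor of $C$. The crux is to select the lengths $\|\tilde v_i\|$ so that the per-step losses collapse to a single copy of $\tfrac{2(d-k+1)}{d-k+2}$; this should follow from the precise form of the constant in (\ref{ineq:LHS of Theorem 1.2 in local A-F paper}) together with the fact that once a coordinate has been orthogonalized, subsequent applications of (\ref{ineq:LHS of Theorem 1.2 in local A-F paper}) inherit a more favorable configuration.
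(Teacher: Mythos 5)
Your high-level plan is the one the paper has in mind: use the Section \ref{section:connections to intrinsic volumes} dictionary to identify the two sides of (\ref{ineq:vec maclaurin p=1 reduction}) with intrinsic-volume ratios as in (\ref{ineq:vec maclaurin p=1 reduction in terms of projections}), substitute the local Aleksandrov--Fenchel inequality (\ref{ineq:LHS of Theorem 1.2 in local A-F paper}) to obtain a $C$-relaxed reduction with $C=\tfrac{2(d-k+1)}{d-k+2}$, and then run the orthogonalization machinery from Section \ref{section:partial results for p=1 and monotonicity argument}. The translation you carry out is correct.

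The problem is that the step you flag in your last paragraph is not a bookkeeping detail --- it is where the proof actually lives, and you have not supplied it. With only the $C$-relaxed reduction, the interval
\begin{equation*}
\left[\;\frac{\sum_{|I|=k-1}|v_1\wedge v_I|}{\sum_{|I|=k-1}|v_I|}\;,\;\frac{\sum_{|I|=k-2}|v_1\wedge v_I|}{\sum_{|I|=k-2}|v_I|}\;\right]
\end{equation*}
can be empty, since you only know the left endpoint is at most $C>1$ times the right one. So you cannot, as you write, simply ``pick $\|\tilde v_1\|$ inside the admissible interval''; you are forced to sacrifice one of the two conditions $S_k(\tilde v_1,v_2,\ldots,v_d)\ge S_k(v)$, $S_{k-1}(\tilde v_1,v_2,\ldots,v_d)\le S_{k-1}(v)$, and the sacrifice costs a multiplicative factor up to $C$ at that step. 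Naively iterating through all $d$ coordinates then gives $S_{k-1}(\tilde v)\le C^{d}\,S_{k-1}(v)$, hence a final constant like $C^{d/(k-1)}$ after the $1/(k-1)$-th root, which is far worse than the single $C$ in the statement. Your claim that ``once a coordinate has been orthogonalized, subsequent applications of (\ref{ineq:LHS of Theorem 1.2 in local A-F paper}) inherit a more favorable configuration'' and that therefore ``the per-step losses collapse to a single copy of $C$'' is precisely the assertion that must be proved, and it is not obvious: after $\tilde v_1\perp v_2,\dots,v_d$ is inserted, the ratios $A_{k-1}$, $A_{k-2}$ for the next replacement become weighted averages of the corresponding ratios for $\{v_3,\dots,v_d\}$, and one still has to control those by some constant, which can in principle degrade again. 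Without an explicit argument for why only one copy of $\tfrac{2(d-k+1)}{d-k+2}$ survives --- either a careful choice of lengths $\|\tilde v_i\|$, a reorganization of the chain of inequalities, or a one-shot argument replacing the iteration entirely --- this remains a genuine gap. (You should also be aware that $Z=\sum_{i\ge 2}\tfrac12[-v_i,v_i]$ is at most $(d-1)$-dimensional, so some care is needed in specifying the ambient dimension in which (\ref{ineq:LHS of Theorem 1.2 in local A-F paper}) is invoked, since its constant depends on that dimension; this is a secondary point, but it should be addressed when the argument is written out.)
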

\noindent Note that the constant appearing on the right-hand side is greater than $1$, but smaller than $2$. As a consequence of Theorem \ref{thm:vec maclaurin list of known p=1 cases}, we know that the sharp constant is equal to $1$ in dimensions $d=3,4$. Thus, it seems likely that the constant given in Theorem \ref{thm:vec maclaurin with non-sharp constant} is suboptimal.

\bigskip

\noindent {\bf Acknowledgements.} The authors would like to thank Anthony Carbery and Apostolos Giannopoulos for useful discussions. The first named is supported by the Hellenic Foundation for Research and Innovation (Project Number: 1849).

\bigskip

\footnotesize
\bibliographystyle{amsplain}

\bigskip

\medskip

\thanks{\noindent {\bf Keywords:}  Maclaurin inequality, andov-Fenchel inequality, mixed volumes, parallelotopes.}

\smallskip

\thanks{\noindent {\bf 2010 MSC:} Primary 52A20; Secondary 52A39, 15A45.}

\bigskip

\bigskip

\noindent \textsc{Silouanos Brazitikos}: Department of
Mathematics, National and Kapodistrian University of Athens, Panepistimiopolis 157-84,
Athens, Greece.

\smallskip

\noindent \textit{E-mail:} \texttt{silouanb@math.uoa.gr}

\bigskip

\noindent \textsc{Finlay McIntyre}: School of Mathematics and Maxwell Institute for Mathematical
Sciences, University of Edinburgh, JCMB, Peter Guthrie Tait Road King's Buildings,
Mayfield Road, Edinburgh, EH9 3FD, Scotland.

\smallskip

\noindent \textit{E-mail:} \texttt{s1204774@sms.ed.ac.uk}

\end{document}